\algrenewcommand{\algorithmiccomment}[2][.6\linewidth]{\leavevmode\hfill\makebox[#1][l]{$\triangleright$ #2}}
\algnewcommand{\algorithmicgoto}{\textbf{goto}}%
\algnewcommand{\Goto}[1]{\algorithmicgoto~\ref{#1}}%
\newcommand{\COMMENT}[2][.47\linewidth]{%
  \leavevmode\hfill\makebox[#1][l]{$\triangleright$~#2}}
\renewcommand{\Re}{\mathbb{R}}
\newcommand{\I}{N}
\newcommand{\ead}{\dot{\mu}}
\newcommand{\ea}{\mu}
\newcommand{\bet}{\beta}
\newcommand{\cratio}{\vartheta}
\newcommand{\ra}{\rho_1}
\newcommand{\rb}{\rho_2}
\newcommand{\rc}{\rho_3}
\newcommand{\rh}{\rho}
\newcommand{\A}{\psi}
\newcommand{\kap}{\underline{\kappa}(A)}
\newcommand{\M}{\alpha_M}
\newcommand{\m}{M}
\newcommand{\EM}{\|\m\|}
\newcommand{\ir}{$\mathcal{IR}$}
\newcommand{\tg}{$\mathcal{TG}$}
\newcommand{\V}{$\mathcal{V}$}
\newcommand{\fmg}{$\mathcal{FMG}$}
\newcommand{\alp}{\dot{m}_P^+}
\newcommand{\alpnodot}{m_P^+}
\newcommand{\alabar}{{\bar m}_A^+}
\newcommand{\aladot}{{\dot{m}}_A^+}
\newcommand{\cc}{\sigma}
\newcommand{\ewe}{\boldsymbol{\varepsilon}}
\newcommand{\ewed}{\boldsymbol{\dot{\varepsilon}}}
\newcommand{\eweb}{\boldsymbol{\bar{\varepsilon}}}
\newcommand{\rinner}{\rho}
\newcommand{\rtgstar}{\rho_{tg}^*}
\newcommand{\rvstar}{\rho_{v}^*}
\newcommand{\rltg}{\rho_{ltg}}
\newcommand{\rtg}{\rho_{tg}}
\newcommand{\rtghat}{{\hat \rho}_{tg}}
\newcommand{\rv}{\rho_{v}}
\newcommand{\pvar}{\tau}
\newcommand{\pvarb}{{\bar \pvar}}
\newcommand{\pvard}{{\dot{\pvar}}}
\newcommand{\pvarjm}{\pvard_{j - 1}}
\newcommand{\pvarj}{\pvard_{j}}
\newcommand{\pvarone}{\pvard_{1}}
\newcommand{\pir}{\delta_{\rho_{ir}}}
\newcommand{\thresh}{\chi}
\newcommand{\ptg}{\delta_{\rho_{tg}}}
\newcommand{\ptghat}{{\hat \delta}_{\rho_{tg}}}
\newcommand{\ptgjmb}{{\delta}_{\rho_{tg}}(\pvarjm)}
\newcommand{\ptgjb}{{\delta}_{\rho_{tg}}(\pvarj)}
\newcommand{\pltg}{\delta_{\rho_{ltg}}}
\newcommand{\ptgoneb}{{\delta}_{\rho_{tg}}(\pvarone)}
\newcommand{\pv}{\delta_{\rho_{v}}}
\newcommand{\gmm}{\gamma}
\newcommand{\hot}{\varphi}
\newcommand{\emm}{m}
\newcommand{\bigO}{\mathcal{O}}
\definecolor{lo}{HTML}{008000}
\definecolor{med}{HTML}{0000FE}
\definecolor{hi}{HTML}{FB0106}
\DeclareMathOperator{\fl}{fl}
\title{Algebraic error analysis for mixed-precision multigrid solvers\thanks{Submitted to the editors June 29, 2020.}}
\author{Stephen F. McCormick\thanks{University of Colorado at Boulder, Boulder, CO
  (\email{stephen.mccormick@colorado.edu}).}
\and Joseph Benzaken\thanks{Walt Disney Animation Studios, Burbank, CA
  (\email{Joseph.Benzaken@disneyanimation.com}, \newline\email{Rasmus.Tamstorf@disneyanimation.com}).}
\and Rasmus Tamstorf\footnotemark[3]}
\definecolor{darkblue}{rgb}{0.0,0.0,0.3}
\begin{document}

\maketitle

\begin{abstract}
 This paper establishes the first theoretical framework for analyzing the rounding-error effects on multigrid methods using mixed-precision iterative-refinement solvers. While motivated by the sparse symmetric positive definite (SPD) matrix equations that arise from discretizing linear elliptic PDEs, the framework is purely algebraic such that it applies to matrices that do not necessarily come from the continuum. Based on the so-called {\em energy} or $A$ norm, which is the natural norm for many problems involving SPD matrices, we provide a normwise forward error analysis, and introduce the notion of progressive precision for multigrid solvers. Each level of the multigrid hierarchy uses three different precisions that each increase with the fineness of the level, but at different rates, thereby ensuring that the bulk of the computation uses the lowest possible precision. The theoretical results developed here in the energy norm differ notably from previous theory based on the Euclidean norm in important ways. In particular, we show that simply rounding an exact result to finite precision causes an error in the energy norm that is proportional to the square root of $\kappa$, the associated matrix condition number. (By contrast, this error is of order $1$ when measured in the Euclidean norm.) Given this observation, we show that the limiting accuracy for both V-cycles and full multigrid is optimal in the sense that it is also proportional to $\kappa^{1/2}$ in energy. Additionally, we show that the loss of convergence rate due to rounding grows in proportion to $\kappa^{1/2}$, but argue that this loss is insignificant in practice. The theory presented here is the first forward error analysis in the energy norm of iterative refinement and the first rounding error analysis of multigrid in general.
\end{abstract}

% REQUIRED
\begin{keywords}
  mixed precision, progressive precision, rounding error analysis, multigrid
\end{keywords}

% REQUIRED
\begin{AMS}
%% %65Fxx
%% %Numerical linear algebra
%% 65F10 % Iterative methods for linear systems

%% %65Gxx
%% %Error analysis and interval analysis
%% 65G50 % Roundoff error

%% % 65Mxx
%% % Partial differential equations, initial value and time-dependent initial-boundary value problems
%% 65M55 % Multigrid methods; domain decomposition 

  65F10,65G50,65M55
\end{AMS}

\section{Introduction}

Most computing systems today are power limited. This is true all the way from battery operated edge devices to the top supercomputers of the world. Current trends in computer architectures therefore favor computation with low precision arithmetic as it allows higher throughput and reduces the amount of data that must be moved through the memory hierarchy. As an example, using single precision allows for roughly four times higher throughput than double precision, \cite{Galal2012}, while the savings due to memory traffic can be even larger depending on how close to the processor the data is stored, \cite{Pedram2017}. Unfortunately, ill-conditioned problems are often intractable in very low precision using standard methods. As a result, much research over the years has  focused on algorithms using mixed-precision computation, with most operations performed in low precision while select operations use higher precision to achieve higher accuracy. This approach was first introduced in \cite{Wilkinson1948}, and later analyzed in \cite{Wilkinson1963} and \cite{Moler1967}. Since then, it has become a well-known technique and, with the introduction of GPUs, much research has focused on combining single and double precision \cite{Langou2006,Buttari2007,Buttari2008,Baboulin2009}. More recently, the addition of hardware support for half-precision (FP16) computations has led to a push toward the use of FP16 \cite{Haidar2017,Haidar2018}. Mixed-precision approaches are also beneficial for minimizing the need for extended precision when dealing with problems that are inherently so ill-conditioned that they are difficult to solve even using today's high-precision arithmetic. Examples of such ill-conditioned problems include high-resolution discretizations of high-order partial differential equations (PDEs). 

As described in \cite{Haidar2017}, many mixed-precision methods fall within the broad class of nested \emph{inner-outer} iterative methods. In this context, {\em outer} refers to an upper-level solver that calls the {\em inner} method to perform essential solver tasks. In this paper, we consider a particular algorithm belonging to this class of methods. Specifically, we use iterative refinement (IR) with three precisions as the outer solver (similar to \cite{Carson2018}) and a \emph{restarted} multigrid V-cycle as the inner solver. As an important contribution, we extend the traditional notion of mixed-precision IR to progressive precision by allowing all precision levels to increase with each new fine level in the multigrid hierarchy. Our goal is to establish a purely {\em algebraic} theory for this method that guarantees convergence when the matrix condition number times unit roundoff is about one at each level of the hierarchy. The estimates confirm that convergence is achieved at nearly the same rate as in infinite precision. The result is accuracy comparable to an algebraic sense of discretization accuracy that is motivated by, but not dependent on, discretized elliptic PDEs. The theory also establishes optimal convergence to this abstract discretization accuracy for progressive-precision full multigrid (FMG). While mixed-precision methods have previously been considered for a variety of multigrid algorithms (e.g., \cite{Strzodka2006,Goeddeke2010,Clark2010}), convergence theory has to the best of our knowledge not been established before.

Our forward rounding error analysis for iterative refinement parallels \cite{Carson2017}, with the crucial difference that we focus on (sparse) symmetric positive definite (SPD) matrices. This  allows us to pose the theory in the energy norm, which is the natural and desired norm for a large class of elliptic PDEs. Physical principles are often posed in terms of minimizing  some physical quantity such as energy, leading to PDEs of this class equipped with a naturally induced energy norm.  %Moreover, application practitioners often care not only about how close their approximations track the solution, but also how smoothly they do so. Undesirable oscillations like the Gibbs phenomenon are properly exposed by the derivatives present in the energy norm, meaning that this norm is directly in line with the goal of computation for the applications that we have in mind.

Extending the results of \cite{Carson2017} from the infinity to the energy or $A$ norm is critical because it enables access to standard variational theory for V-cycles and FMG, and the resulting forward analysis enables direct estimation of the rounding-error effects involving $\kappa^{1/2}$ instead of $\kappa$, where $\kappa$ is the matrix condition number. (All norms on a \emph{fixed} Euclidean space are theoretically equivalent, but the constants really matter: those relating the energy and Euclidean norms depend on $\kappa$. As such, the disparity in these constants grows rapidly up through the hierarchy of levels that we consider.) We also exploit matrix sparsity to eliminate the direct dependence of the error bounds on the size of the matrix, thus avoiding an even higher power of the condition number in the estimates. These advantages for the sparse SPD case allow us to sharpen existing theory for general matrices, most notably those in \cite{Carson2017}. 

The results developed here in the energy norm differ notably from previous theory based on the Euclidean norm in other important ways. In particular, we show that simply rounding an exact result to finite precision causes an error in the energy norm that is proportional to $\kappa^{1/2}$. (By contrast, this error is of order $1$ when measured in the Euclidean norm.) Given this observation, the limiting accuracy for both V-cycles and full multigrid proves to be optimal in the sense that it is also proportional to $\kappa^{1/2}$ in energy. The loss of convergence rate from rounding also grows in proportion to $\kappa^{1/2}$, but we argue that this loss is insignificant in practice.

%We begin by showing that simply rounding an exact result to finite precision causes an error in the energy norm that is proportional to the square root of $\kappa$, the associated matrix condition number. (By contrast, this error is of order $1$ when measured in the Euclidean norm.) Given this observation, we show that the limiting accuracy for both V-cycles and full multigrid is optimal in the sense that it is also proportional to $\kappa^{1/2}$ in energy. Additionally, we show that the loss of convergence rate due to rounding grows proportional to $\kappa^{1/2}$, but argue that this loss is insignificant in practice. (Existing results establish more restrictive bounds in the Euclidean norm that grow with $\kappa$.)

By definition, multigrid coarsening in correction form applies to the residual equation, so it is naturally an iterative-refinement process. Each step of a residual-based relaxation scheme can also be interpreted as iterative refinement. A mixed-precision version of multigrid therefore requires very little change in the algorithm itself, and that change is primarily in the choice of where to invoke higher precision. Our choice here is to apply a simple multigrid algorithm in low precision to the residuals computed in higher precision between cycles, where the cycles we study involve one relaxation sweep on each grid. (We establish an extension to multiple sweeps in the supplemental material.)
%While this minimalist choice simplifies the theory somewhat, multigrid methods typically use two or three relaxation sweeps per cycle. See \cite{Benzaken2020} for treatment of more than one sweep per cycle, as well as other theoretical issues and results of numerical experiments that confirm our theory for a model biharmonic equation.

We begin in the next section by describing tools used in our theory in the form of basic error-analysis estimates for floating point arithmetic. Section~\ref{sec:ir} reviews the classical iterative-refinement process while detailing the notation, terminology, and certain principles that are used throughout the remaining sections. We develop a general theory in Section~\ref{sec:analysis-ir} that provides error bounds for iterative refinement assuming an abstract energy-convergent inner solver.  Section~\ref{sec:twogrid} then introduces a reformulation of the two-grid method that leverages mixed-precision iterative refinement. The V-cycle follows in Section~\ref{sec:v-cycle}. Theoretical results are presented in Section~\ref{sec:analysis-twogrid} and \ref{sec:analysis-mg} that, together with our iterative refinement estimates, provide a framework for determining convergence of mixed-precision two-grid and V-cycle solvers for specific applications. The notion of progressive precision is introduced in Section~\ref{sec:analysis-mg} as part of the analysis of the V-cycle. Progessive-precision FMG is then introduced in Section~\ref{sec:fmg} and analyzed in Section~\ref{sec:analysis-fmg}. The paper concludes in Section~\ref{sec:conclusion}.

\section{Floating Point Estimates}
\label{sec:fp}
We consider three basic floating point environments: ``standard'' precision with unit roundoff $\ewe$, referred to here as $\ewe$-precision; ``high'' precision with unit roundoff $\eweb$, referred to here as $\eweb$-precision; and ``low'' precision with unit roundoff $\ewed$, referred to here as $\ewed$-precision. Initially, one can think of $\eweb$-precision as being double, and $\ewed$-precision as being half, that of $\ewe$-precision, but we only formally require ${\eweb} \le \ewe \le \ewed$. 

Our analysis is based on existing models of error due to rounding. See \cite{Higham2002} for a thorough discussion of these models and other aspects of rounding error analysis. This section describes only those estimates used in our theory.

Let $\circ$ stand for one of the four basic binary operations $+, -, *, /$ between two scalars $x$ and $y$, and let $\fl(x\circ y)$ stand for the $\ewe$-precision computed value of the exact operation $x\circ y$. The standard model \cite[Eq. (2.4)]{Higham2002} that accounts for rounding error (in $\ewe$-precision for illustration) is then given by
\begin{equation}
\fl(x\circ y) = (x\circ y)(1 + \delta), \quad |\delta| \le \ewe ,
\label{fl1}
\end{equation}
where $\fl()$ denotes the result of the floating point operation. (The definition of $\delta$ with and without subscripts may change meaning at each occurrence in this paper, especially here and from one proof to the next.) An alternative to (\ref{fl1}) is the variant \cite[Eq. (2.5)]{Higham2002}, which we write as
\begin{equation}
\fl(x\circ y) = x\circ y + \delta \fl(x\circ y), \quad |\delta| \le \ewe.
\label{fl3}
\end{equation}
Assuming now that $A \in \Re^{n \times n}$ and $x \in \Re^n$, then the dot product model \cite[Eq. (3.4)]{Higham2002} implies the following matrix-vector product estimate in $\ewe$-precision:
\begin{equation}
\fl(Ax) = Ax + \delta, \quad |\delta| \le \frac{n\ewe}{1 - n \ewe} |A|\cdot |x|,
\label{fl5a}
\end{equation}
where $|z|$ denotes the vector of the absolute values of $z \in \Re^n$ (similarly for matrices) and  relations between vectors and matrices are defined componentwise. With $\| \cdot \|$ denoting the Euclidean norm for a vector and its induced matrix norm (together with the Euclidean inner product $\langle \cdot, \cdot \rangle$), we also frequently use the fact that $\||z|\| = \|z\|$ for any vector $z$ (although this is not generally true for matrices).
If $b \in \Re^n$, then an estimate for computing the residual $Ax - b$ in $\ewe$-precision is given by
\begin{equation}
\fl(Ax - b) = Ax - b + \delta, \quad |\delta| \le \frac{(n + 1)\ewe}{1 - (n + 1)\ewe} (|b| + |A|\cdot |x|).
\label{fl5b}
\end{equation}
Finally, computing the residual in $\eweb$-precision and rounding it to $\ewe$-precision results in the following bound \cite[Eq. (2.3)]{Carson2017}:
\begin{equation}
\fl(Ax - b) = Ax - b + \delta, \quad |\delta| \le \ewe|Ax - b| + (1 + \ewe) \frac{(n + 1){\eweb}}{1 - (n + 1){\eweb}} (|b| + |A|\cdot |x|).
\label{fl6}
\end{equation}

We have used $\fl$ here to indicate computed quantities, but it would be too cumbersome to continue with this notation in the analysis that follows. Instead of $\fl$ or any other special way to denote computed quantities, we add $\delta$'s to exact expressions, with various subscripts and bounds, to denote quantities computed in finite precision. For example, the {\em computed} residual $r$ in (\ref{fl6}) would be written as
\[
r = Ax - b + \delta_r, \quad |\delta_r| \le \ewe |Ax - b| + (1 + \ewe) \frac{n{\eweb}}{1 - n{\eweb}} (|b| + |A|\cdot |x|)
\]
and the {\em computed} solution of $Ax = b$ would be written as $A^{-1}b + \delta$.

Following the usual convention in rounding-error analyses, we assume throughout that the system matrix $A$ and right-hand side (RHS) $b$ for our target problem (see (\ref{axb}) below) are exact. To rein in the complexity of our convergence bounds, we take this assumption further by assuming exactness of all of the multigrid components: the intergrid transfer, the system matrix, and the RHS on all levels. While this may be a reasonable assumption in certain ideal cases (e.g., simple discretizations of Poisson's equation), these quantities are only approximate in most applications, such as when they are constructed via finite elements using quadrature for a weak form applied to individual basis elements. Even if these approximations are very accurate, $A$ and $b$ must ultimately be stored in finite precision. While it may be argued that errors in the target problem are not at issue here since our focus is on the discrete problem we are given, accuracy of the coarse components within the multigrid solvers is important. This is particularly true for FMG since its goal is to deliver discretization-order accuracy, which makes it the most error-sensitive multigrid scheme. See \cite{Benzaken2020} for an analysis of the effects of finite precision on the coarse FMG components.

\section{Iterative Refinement}
\label{sec:ir}

Consider the $n \times n$ matrix equation
\begin{equation}
Ax = b,
\label{axb}
\end{equation}
where $n$ is a positive integer, $A \in \Re^{n \times n}$ is SPD, $x \in \Re^n$ is unknown, and $b \in \Re^n$ is given. Assume that $A$ is sparse, with at most $m_A$ nonzeros per row. The first mixed-precision approach we analyze in this paper uses IR as the outer loop and an approximate linear solver (to be specified later) as the inner loop. The pseudocode {\ir } is given in Algorithm \ref{alg-ir} below. As noted by \cite[Sec. 2.5]{Demmel1997}, {\ir } is effectively Newton's method applied to the function $f(x)=Ax-b$, where $f(x)$ is considered as a nonlinear function due to rounding errors. It can also be interpreted in many other ways, including preconditioned Richardson iteration \cite{Buttari2008}, defect correction \cite{Chatelin1983}, and double discretization \cite{BrandtLivne2001}.

The floating-point operations in {\ir } here use all three precisions. The full residual $r= Ax - b$ between successive calls to the inner solver is evaluated in $\eweb$-precision (red font), while the inner solver itself uses $\ewed$-precision (green font). All other operations, and most notably the update step, are evaluated using $\ewe$-precision (blue font). Note that step 2 of {\ir } is in red and blue font because the residual is computed in $\eweb$-precision but rounded to $\ewe$-precision. A critical feature of {\ir } is that the inner loop is applied not to the full equation $Ax = b$ for the full approximation $x$, but rather to the {\ir } {\em residual equation} $Ay = r$ for the correction $y$.

\noindent
\begin{center}
\begin{minipage}{0.8\linewidth}
\begin{algorithm}[H]
\footnotesize
\caption{$\texttt{Iterative Refinement (\ir)}$}
\label{alg-ir}
\begin{algorithmic}[1]
      \Require A, b, x initial guess, tol $> 0$ convergence tolerance.
      \State \textcolor{med}{$r \leftarrow$}\textcolor{hi}{$ Ax - b$}\Comment{\textcolor{hi}{Compute {\ir } Residual} and \textcolor{med}{Round}}\label{ir-begin}
      \If{\textcolor{med}{$\|r\| <$ tol}}
      \State\Return $x$\Comment{\textcolor{med}{Return Solution of $Ax = b$}}
      \EndIf
      \State \textcolor{med}{$y \leftarrow$} \textcolor{lo}{$\texttt{InnerSolve}(A, r)$}\Comment{\textcolor{lo}{Compute Approximate Solution of $Ay = r$}}
      \State \textcolor{med}{$x \leftarrow x - y$}\Comment{\textcolor{med}{Update Approximate Solution of $Ax=b$}}
      \State\Goto{ir-begin}
\end{algorithmic}
\end{algorithm}
\end{minipage}
\end{center}
\vspace{1em}

For simplicity, {\ir } here uses the residual error in its stopping criterion. When the matrix is highly ill-conditioned, the residual can be a poor indicator of convergence because most solvers tend to produce errors whose components are predominantly in the lower spectrum of the matrix and are thus hidden in the residual. Fortunately, in practice, the residual is usually a good indicator for multigrid: while a few relaxation sweeps generally lead to the lower end of the spectrum dominating the error, coarsening tends to reduce those components so that the upper end is exposed; taken together, relaxation and coarsening tend to produce errors that are balanced across the spectrum. Nevertheless, the stopping criterion for mixed-precision algorithms should be chosen carefully as \cite{Baboulin2009} recommends. This issue is discussed further in \cite{Arioli1989} and \cite{Demmel2006}, and we develop a stopping criterion carefully for full multigrid in the related paper, \cite{Benzaken2020}, based on estimates of the differences between approximations on successive grid levels.

\section{Convergence Theory: Iterative Refinement}
\label{sec:analysis-ir}

The statement and proof of our first theorem follow a course similar to that in \cite{Carson2017}. However, instead of the Euclidean norm $\| \cdot \|$, our rounding-error estimates are in terms of the discrete {\em energy} norm $\| \cdot \|_A$ defined by $\|x\|_A = \|A^{\frac{1}{2}}x\|, x \in \Re^n$. Our bounds are also tighter in that they exploit the sparsity of $A$. We state this theorem in terms of a general iterative solver satisfying a convergence bound with generic factor $\rh$. Later, we consider specific multigrid solvers for the inner loop.

Several parameters appear below in our convergence estimates, such as the condition number of $A$ denoted by $\kappa(A)=\|A\| \cdot \|A^{-1}\|$. For convenience, let
\[
\A = \||A|\|, \, \kap = \A \|A^{-1}\|, \textrm{  and  } \alabar = \frac{m_A + 1}{1 - (m_A + 1){\eweb}}.
\]
All bounds obtained herein are expressed in terms of the parameters $\pvard = \kappa^\frac{1}{2}(A)\ewed$, $\pvar = \kappa^\frac{1}{2}(A)\ewe$, and $\pvarb = \kappa(A){\eweb}$ that represent fundamental scales. Quantities representing rounding errors in convergence factors (e.g., $\pir$ and $\ptg$) and {\ir's } limiting accuracy (i.e., $\thresh$) are functions of these  parameters, but this is suppressed in the notation when that dependence is clear. To simplify the bounds below, let
\begin{equation}
\gmm = \frac{\kappa^{\frac{1}{2}}(A) + \kap}{\kappa(A)},
\label{gmm}
\end{equation}
which is of order $1$ for discrete Laplacians and many other discrete elliptic PDEs.

Throughout the paper, we use the notation of the pseudo algorithms we present, with superscripts used occasionally within the iterations to keep track of various quantities. Thus, $x^{(i)}$ denotes the $i^\textrm{th}$ {\ir } iterate and $x^{(0)}$ the {\ir } initial guess. 

\begin{thm}{\em \ir.}
Let $x^{(i)}$ be the iterate at the start of the $i^\textrm{th}$ cycle of {\ir } and $r = Ax^{(i)} - b$ its residual computed in ${\eweb}$-precision and rounded to $\ewe$-precision. Suppose that a $\rinner < 1$ exists such that, for any $r \in \Re^n$, the solver used in the inner loop of Algorithm~\ref{alg-ir} (line 5) is guaranteed to compute a correction $y$ that satisfies
\begin{equation}
\|y - A^{-1} r\|_A \le \rinner \|A^{-1} r\|_A .
\label{rh}
\end{equation}
Then $x^{(i+1)}$ approximates the solution $A^{-1} b$ of (\ref{axb}) with the {\em relative} error bound
\begin{equation}
\frac{\|x^{(i+1)} - A^{-1}b\|_A}{\|A^{-1}b\|_A} \le \rho_{ir} \frac{\|x^{(i)} - A^{-1}b\|_A }{\|A^{-1}b\|_A} + \thresh,
\label{relbnd}
\end{equation}
where
\begin{equation}
\rho_{ir} = \rinner + \pir, \quad \pir = \frac{(1 + 2\rinner) \pvar + \gmm(1 + \rinner)(1 + \ewe)\alabar \pvarb}{1 - \pvar},
\label{ec}
\end{equation}
and
\begin{equation}
\thresh = \frac{\pvar
+ \gmm(1 + \rinner)(1 + \ewe) \alabar\pvarb}{1 - \pvar}.
\label{chi}
\end{equation}
If $\rinner + \pir < 1$, then the error after $\I \ge 1$ {\ir } cycles starting with initial guess $x^{(0)}$ satisfies
\begin{equation}
\frac{\|x^{(\I)} - A^{-1}b\|_A}{\|A^{-1}b\|_A} \le (\rinner + \pir)^\I \frac{\|x^{(0)} - A^{-1}b\|_A}{\|A^{-1}b\|_A} + \frac{\thresh }{1 -(\rinner + \pir)} .
\label{Kbnd}
\end{equation}

\label{theorem2}
\end{thm}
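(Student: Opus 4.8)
The plan is to follow a single {\ir } cycle in the energy norm and then iterate. Put $e^{(i)} = x^{(i)} - A^{-1}b$, so the exact residual is $A x^{(i)} - b = A e^{(i)}$. By the sparse version of (\ref{fl6}) (each row of $A$ has at most $m_A$ nonzeros, so $n+1$ may be replaced by $m_A+1$, which turns the high-precision coefficient into $(1+\ewe)\alabar\eweb$), the computed residual is $r = A e^{(i)} + \delta_r$ with $|\delta_r| \le \ewe |A e^{(i)}| + (1+\ewe)\alabar\eweb(|b| + |A|\cdot|x^{(i)}|)$. The inner solver returns $y = A^{-1} r + d$ where, by the hypothesis (\ref{rh}), $\|d\|_A \le \rinner \|A^{-1} r\|_A$; since $A^{-1} r = e^{(i)} + A^{-1}\delta_r$, this gives $\|d\|_A \le \rinner(\|e^{(i)}\|_A + \|A^{-1}\delta_r\|_A)$. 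For the update I would use the variant (\ref{fl3}) rather than (\ref{fl1}), writing $x^{(i+1)} = x^{(i)} - y + \delta_x$ with $|\delta_x| \le \ewe|x^{(i+1)}|$; this is the standard device that produces the $1/(1-\pvar)$ denominators later. Substituting $y$ and using $x^{(i)} - e^{(i)} = A^{-1}b$, the leading terms cancel and the cycle collapses to $e^{(i+1)} = -A^{-1}\delta_r - d + \delta_x$, whence $\|e^{(i+1)}\|_A \le \|A^{-1}\delta_r\|_A + \|d\|_A + \|\delta_x\|_A$.

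The technical core is bounding $\|A^{-1}\delta_r\|_A$ in terms of $E := \|e^{(i)}\|_A$, $B := \|A^{-1}b\|_A$, and the scale parameters $\pvar,\pvarb$. First one passes from the componentwise bound on $\delta_r$ to a Euclidean bound using $\| |z| \| = \|z\|$ and submultiplicativity of the induced norm (so $\| |A|\cdot|v| \| \le \A\|v\|$), after splitting $|x^{(i)}| \le |A^{-1}b| + |e^{(i)}|$ so that everything is phrased through $|A^{-1}b|$ and $|e^{(i)}|$. Then one transfers between Euclidean and energy norms via the three elementary SPD inequalities $\|v\| \le \|A^{-1}\|^{1/2}\|v\|_A$, $\|Av\| \le \|A\|^{1/2}\|v\|_A$, and $\|A^{-1}v\|_A \le \|A^{-1}\|^{1/2}\|v\|$. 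The product $\|A\|^{1/2}\|A^{-1}\|^{1/2}$ collapses to $\kappa^{1/2}(A)$ and $\A\|A^{-1}\|$ to $\kap$, so with $\kappa^{1/2}(A)\ewe = \pvar$, $\kappa(A)\eweb = \pvarb$, and $\kap\eweb \le (\kappa^{1/2}(A)+\kap)\eweb = \gmm\pvarb$ (recall (\ref{gmm})) one reaches $\|A^{-1}\delta_r\|_A \le \pvar E + \gmm(1+\ewe)\alabar\pvarb(E+B)$. This normwise transfer — where the $\kappa^{1/2}$ (rather than $\kappa$) appears and where the $\gmm$ factor is born — is the delicate step, and is exactly where the energy-norm analysis departs from the Euclidean one.

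For the last term, $\|\delta_x\|_A \le \|A\|^{1/2}\ewe\|x^{(i+1)}\| \le \pvar\|x^{(i+1)}\|_A \le \pvar(\|e^{(i+1)}\|_A + B)$, so moving it to the left of the collapsed recurrence yields $(1-\pvar)\|e^{(i+1)}\|_A \le (1+\rinner)\|A^{-1}\delta_r\|_A + \rinner E + \pvar B$ (here $\pvar < 1$, implicit in the statement, is needed). Substituting the bound for $\|A^{-1}\delta_r\|_A$, collecting the coefficients of $E$ and of $B$, and rewriting the $E$-coefficient with the identity $(1+\rinner)\pvar + \rinner = \rinner(1-\pvar) + (1+2\rinner)\pvar$ gives precisely $\|e^{(i+1)}\|_A \le (\rinner+\pir)E + \thresh B$ with $\pir$ and $\thresh$ as in (\ref{ec})--(\ref{chi}); dividing by $B = \|A^{-1}b\|_A$ is (\ref{relbnd}). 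Finally, (\ref{Kbnd}) follows by setting $\epsilon_i := \|x^{(i)} - A^{-1}b\|_A/\|A^{-1}b\|_A$, reading (\ref{relbnd}) as $\epsilon_{i+1} \le (\rinner+\pir)\epsilon_i + \thresh$, and, under $\rinner+\pir < 1$, unrolling the recursion and summing the geometric series, which bounds the accumulated contribution by $\thresh/(1-(\rinner+\pir))$.

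As flagged, the main obstacle is the norm bookkeeping in the second paragraph: each passage among the componentwise, Euclidean, and energy norms must be arranged so that the powers of $\kappa$ stay minimal — one $\kappa^{1/2}$ from the $\ewe$-term, and the $\eweb$-term producing $\pvarb$ together with the bounded factor $\gmm$ — and so that the stray factors $\|A\|^{1/2}$, $\|A^{-1}\|^{1/2}$, and $\A$ cancel cleanly into $\pvar$, $\pvarb$, and $\gmm$ rather than into a higher power of $\kappa$; everything after that (the algebraic identity for the $E$-coefficient, the geometric series) is routine.
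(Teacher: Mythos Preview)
Your proposal is correct and follows essentially the same route as the paper's proof: the same residual model via (\ref{fl6}) with the sparse replacement $n+1\to m_A+1$, the same update model via (\ref{fl3}) producing the implicit $\|e^{(i+1)}\|_A$ term and hence the $1/(1-\pvar)$ denominator, the same energy-norm transfers yielding $\|A^{-1}\delta_r\|_A \le \pvar E + (1+\ewe)\alabar\eweb\big((\kappa^{1/2}(A)+\kap)B + \kap E\big)$ followed by the loosening $\kap\le \kappa^{1/2}(A)+\kap$ to package the $\eweb$-terms as $\gmm\pvarb$, and the same algebraic identity $(1+\rinner)\pvar+\rinner = \rinner(1-\pvar)+(1+2\rinner)\pvar$ to extract $\rinner+\pir$. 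The only cosmetic differences are that you name $d=y-A^{-1}r$ and write the clean recursion $e^{(i+1)}=-A^{-1}\delta_r-d+\delta_x$ up front, and that you split $|x^{(i)}|\le|A^{-1}b|+|e^{(i)}|$ componentwise whereas the paper splits $\|x^{(i)}\|_A$ in energy; these lead to identical bounds.
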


\begin{proof}
Using (\ref{fl6}) allows us to write the {\ir } residual computed in $\eweb$-precision and rounded to $\ewe$-precision as
\begin{equation}
r = \underbrace{Ax^{(i)} - b}_\textrm{exact residual} + \underbrace{\delta_1}_\textrm{$\ewe$-${\eweb}$ error},
\quad |\delta_1| \le \ewe |Ax^{(i)} - b| + (1 + \ewe)\alabar {\eweb} \left(|b| + |A| \cdot |x^{(i)}|\right).
\label{r0}
\end{equation}
Note that
\begin{align*}
\left\|A^{-\frac{1}{2}}\left(|b| + |A| \cdot |x^{(i)}|\right)\right\| &\le \|A^{-\frac{1}{2}}\| \left(\|AA^{-1}b\| + \A \|x^{(i)}\| \right)\\
&\le \|A^{-\frac{1}{2}}\| \cdot \|A^{\frac{1}{2}}\| \cdot \|A^{-1}b\|_A + \A \|A^{-1}\| \cdot \|x^{(i)}\|_A \\
&= \kappa^{\frac{1}{2}}(A) \|A^{-1}b\|_A + \kap \|x^{(i)}\|_A \\
&\le (\kappa^{\frac{1}{2}}(A) + \kap)\|A^{-1}b\|_A + \kap \|x^{(i)} - A^{-1}b\|_A .
\end{align*}
Hence,
\begin{align}
\|A^{-1}\delta_1\|_A
&= \|A^{-\frac{1}{2}}\delta_1\| \nonumber \\
&\le \ewe\|A^{-\frac{1}{2}}\|\cdot \|Ax^{(i)} - b\|  + (1 + \ewe)\alabar {\eweb}\left\|A^{-\frac{1}{2}}\left(|b| + |A| \cdot |x^{(i)}|\right)\right\| \nonumber \\
&\le \ewe\kappa^\frac{1}{2}(A)\|x^{(i)} - A^{-1}b\|_A  \nonumber \\
&\quad +(1 + \ewe)\alabar {\eweb} \left((\kappa^{\frac{1}{2}} (A) + \kap)\|A^{-1}b\|_A + \kap \|x^{(i)} - A^{-1}b\|_A \right).
\label{del}
\end{align}
By (\ref{rh}) and (\ref{r0}), we thus have that
\begin{align}
\|y &- A^{-1}r\|_A \nonumber \\
&\le \rinner \|A^{-1}r\|_A \nonumber \\
&= \rinner \|A^{-1}\left(Ax^{(i)} - b + \delta_1\right)\|_A \nonumber \\
&\le \rinner \left[\|x^{(i)} - A^{-1}b\|_A + \|A^{-1}\delta_1\|_A\right] \nonumber \\
&\le \rinner \left[(1 + \ewe\kappa^\frac{1}{2}(A))\|x^{(i)} - A^{-1}b\|_A\right. \nonumber \\
&\quad \left.+ (1 + \ewe)\alabar {\eweb}\left((\kappa^{\frac{1}{2}}(A) + \kap)\|A^{-1}b\|_A + \kap \|x^{(i)} - A^{-1}b\|_A \right)\right] .
\label{y1}
\end{align}
Using (\ref{fl3}) allows us to write the {\ir } update computed in $\ewe$-precision as
\[
x^{(i + 1)} = \underbrace{x^{(i)} - y}_\textrm{exact update} + \underbrace{\delta_2}_\textrm{$\ewe$ error}, \quad |\delta_2| \le \ewe |x^{(i + 1)}|.
\]
Using (\ref{del}) and (\ref{y1}) thus leads to
\begin{align}
\|x^{(i + 1)}& - A^{-1}b\|_A = \|x^{(i)} - A^{-1}b - A^{-1}r - (y - A^{-1}r) + \delta_2\|_A \nonumber \\
&\le  \|A^{-1} \delta_1 + (y - A^{-1}r)\|_A + \|\delta_2\|_A \nonumber \\
&\le \|A^{-1} \delta_1\|_A + \|y - A^{-1}r\|_A + \ewe\|A^{\frac{1}{2}}\|\cdot\|x^{(i + 1)}\| \nonumber \\
&\le \left(\rinner + (1 + \rinner)\ewe\kappa^\frac{1}{2}(A))\right)\|x^{(i)} - A^{-1}b\|_A \nonumber \\
&\quad + (1 + \rinner)(1 + \ewe)\alabar {\eweb}\left((\kappa^\frac{1}{2}(A) + \kap)\|A^{-1}b\|_A + \kap \|x^{(i)} - A^{-1}b\|_A \right) \nonumber \\
&\quad + \ewe \kappa^\frac{1}{2}(A)(\|x^{(i + 1)} - A^{-1}b\|_A + \|A^{-1}b\|_A) \nonumber\\
&= \ewe \kappa^\frac{1}{2}(A)\|x^{(i + 1)} - A^{-1}b\|_A + (\rinner + \delta_3) \|x^{(i)} - A^{-1}b\|_A \nonumber \\
&\quad+ \left(\ewe\kappa^\frac{1}{2}(A) + (1 + \rinner)(1 + \ewe)\alabar {\eweb}(\kappa^\frac{1}{2}(A) + \kap)\right) \|A^{-1}b\|_A ,
\label{err1}
\end{align}
where
\[
\delta_3 = (1 + \rinner)\left( \ewe\kappa^\frac{1}{2}(A) + (1 + \ewe)\alabar {\eweb} \kap\right).
\]
Bound (\ref{relbnd}) now follows by subtracting $\ewe \kappa^\frac{1}{2}(A)\|x^{(i + 1)} - A^{-1}b\|_A$, dividing by $(1 - \ewe \kappa^\frac{1}{2}(A))\|A^{-1}b\|_A$ on both sides of (\ref{err1}), appealing to (\ref{gmm}), and noting that
\[
\frac{\rinner + \delta_3}{1 - \ewe \kappa^\frac{1}{2}(A)} = \rinner + \frac{\delta_3 + \rinner \ewe \kappa^\frac{1}{2}(A)}{1 - \ewe \kappa^\frac{1}{2}(A)} = \rinner + \pir.
\]
Bound (\ref{Kbnd}) follows by tracing (\ref{relbnd}) back to the initial error and noting that
\[
\sum_{i = 0}^{\I - 1}(\rinner + \pir)^i \le \frac{1}{1 -(\rinner + \pir)}.
\]
\end{proof}

\begin{rem}{\em Limiting Accuracy.}
 Theorem \ref{theorem2} establishes convergence in energy of {\ir } if $\pir < 1 - \rh$, but only until the relative error reaches $\thresh$, the limiting accuracy. If $\rh \ll 0.9$ for example, then we just want $\pir < 0.1$. For sufficiently small $\eweb$, both $\thresh$ and $\pir$ are of order $\pvar = \kappa^\frac{1}{2} (A) \ewe$, so $\thresh$ tends to be the limiting factor long before the loss in $\pir$ becomes an issue. It should also be noted that the inner solver \emph{only} impacts the analysis by way of its convergence factor. Its precision thus has little effect as long as $\rinner \ll 1$. It is also important to note that this performance of {\ir } is optimal in that it reaches an accuracy comparable to that of the finite element solution that has simply been rounded to $\ewe$-precision. That is, simply rounding the exact finite element solution of the PDE induces an error of the same order as the limiting accuracy. The mechanism at play here is the discontinuity of $\fl$ that allows an arbitrarily small-energy perturbation of a function to induce a large-energy perturbation in the last significant bit. As a simple illustration, suppose that (\ref{axb}) is derived from applying Rayleigh-Ritz on a uniform grid $h$ to the {\em no-flow} 1D Poisson-reaction equation  $-u'' + u = f, \, 0 \le x \le 1, \, u'(0)=u'(1)=0$.  Suppose that the exact solution is $x = 1 + y$, where $y$ is a vector with very small energy: $\|y\|_A < \ewe$. Suppose also that $y$ oscillates from positive to negative values from one grid point to the next.  Letting $1 + z$ denote $x$ {\em rounded towards zero} in $\ewe$-precision, then $v$ must oscillate between $-\ewe$ and $0$, thus yielding a relative energy error of
 \[
 \frac{\|1 + z - 1 - y\|_A}{\|1 + z\|_A} 
 \approx \|z\|_A
\approx \sqrt{2} h^{-1}\ewe 
 = \bigO(\kappa^\frac{1}{2}(A) \ewe).
 \]
This is the same level of error that {\ir } achieves at its limiting accuracy.
\end{rem}

\section{Two-grid solver}
\label{sec:twogrid}

Analysis of the full multigrid method proceeds in several stages. We begin here by writing the two-grid solver in correction form applied to the {\ir } residual equation $Ay = r$. (See \cite{tutorial} for an introduction to multigrid methods and principles.)

To understand multigrid solvers and the development that follows, it is important to be clear about terminology. The term {\em residual} could be confusing if the equation it references is not fully understood. Even for standard multigrid solvers based on the correction scheme, care is needed in using the term {\em residual} in reference to an equation: a grid in the middle of the hierarchy gets its equation as an approximation to the finer-grid residual equation, but it too must pass the residual of its approximate residual equation to the next-coarser grid. While such ambiguity is usually avoided in the literature, it becomes more crucial here because we need to compute residuals related to residual equations even on the fine grid. To avoid this potential confusion, we use the terminology {\tg } {\em residual} for those computed within a two-grid cycle, {\V } {\em residual} for those computed within a V-cycle, and {\ir } {\em residual} for those formed from the full approximation $x$ computed between cycles, and similarly for the equations to which they refer.

The two basic components of any multigrid algorithm are {\em relaxation} and {\em prolongation}. We first assume that relaxation is given by the convergent stationary linear iteration $x \leftarrow x - \m (Ax - b)$, where $M \in \Re^{n \times n}$ is a nonsingular matrix. Matrix $M$ is meant to be an easily computed approximation to $A^{-1}$, such as the inverse of the diagonal or lower-triangular part of $A$. (While it is standard in the matrix splitting literature to use $M^{-1}$, we use $M$ here for simplicity.) Next, assume that prolongation (i.e., interpolation) is given by a matrix $P \in \Re^{n \times n_c}$ defined in terms of a coarse level of $n_c < n$ variables. Note that we use subscript $c$ here to signify a coarse-grid quantity with no subscript for fine-grid quantities. We use this convention when there is no risk of ambiguity, but for more than two grids, subscripts involving level $j$ become necessary.

All computations in the pseudocode {\tg } in Algorithm \ref{alg-tg} below are performed in low $\ewed$-precision (green font), except for the exact, infinite-precision coarse-grid solve (black font). Accordingly, since the input RHS may be in higher precision, the cycle is initialized with a rounding step. The solver then proceeds by relaxing on the initial guess to the solution of the {\ir } residual equation and then improving the result by a coarse-grid correction based on prolongation. 

\noindent
\begin{center}
\begin{minipage}{.8\linewidth}
\begin{algorithm}[H]
\footnotesize
\caption{$\texttt{Two-Grid (\tg) Correction Scheme}$}
\label{alg-tg}
\begin{algorithmic}[1]
\Require A, r, P, M.
\State $\textcolor{lo}{r \leftarrow} r$ \Comment{\textcolor{lo}{Round RHS} and Initialize {\tg }}
\State \textcolor{lo}{$y \leftarrow  \m r$}\Comment{\textcolor{lo}{Relax on Current Approximation}}
\State \textcolor{lo}{$r_{\textrm{tg}} \leftarrow Ay - r$}\Comment{\textcolor{lo}{Evaluate {\tg } Residual}}
\State \textcolor{lo}{$b_c \leftarrow P^tr_{\textrm{tg}}$}\Comment{\textcolor{lo}{Restrict {\tg } Residual to Coarse-Grid}}
\State $d_c \leftarrow B_c(P^tAP)^{-1}b_c$\Comment{Solve Coarse-Grid Equation}
\State \textcolor{lo}{$d \leftarrow Pd_c$}\Comment{\textcolor{lo}{Interpolate Correction to Fine Grid}}
\State \textcolor{lo}{$y \leftarrow y - d$}\Comment{\textcolor{lo}{Update Approximate Solution of $Ay = r$}}
\State\Return $y$\Comment{Return Approximate Solution of $Ay = r$}
\end{algorithmic}
\end{algorithm}
\end{minipage}
\end{center}
\vspace{1em}

{\tg } incorporates $B_c \in \Re^{n_c \times n_c}$ in the coarse-grid solution process to allow our theory to apply to the case of approximate solution of the coarse-grid equation (as opposed to exact inversion of $P^tAP$ assumed for the standard two-grid case). Note that if $B_c = I_c$, the coarse-grid identity matrix, then {\tg } reverts to a standard two-grid scheme that uses an exact solve on the coarse grid. The essential assumption that we make about {\tg } when $B_c = I_c$ is that it converges in infinite precision in energy, that is, its error propagation matrix $TG$ is bounded according to
\begin{equation}
\|T G\|_A < 1,
\label{tgb}
\end{equation}
where $G = I - \m A$ is the error propagation matrix for relaxation and $T = I - P (P^tAP)^{-1}P^tA$ is the error propagation matrix for coarse-grid correction. Vectors in the ranges of $T$ and $P$ are called {\em algebraically oscillatory} and {\em algebraically smooth}, respectively, because they tend to correspond to the respective geometrically oscillatory and geometrically smooth vectors targeted by relaxation and coarse-grid correction. See \cite{tutorial}. For the case of general $B_c$, we assume that $\|B_c - I_c\|_{A_c} < 1$, where $A_c = P^tAP$ and $I_c$ is the coarse-grid identity, and we let $\rtgstar$ denote a bound on the norm of the resulting error propagation matrix:
\begin{equation}
\|(I - PB_c (P^tAP)^{-1}P^tA)G\|_A \le \rtgstar,
\label{gamma}
\end{equation}
where the subscript $tg$ signifies two-grid and the superscript asterisk indicates that it is the infinite-precision factor. To see that bound (\ref{tgb}) allows us to choose $\rtgstar < 1$, note that for any fine-grid vector $y$ with $\|y\|_A = 1$, since $I - T$ and $T$ are energy-orthogonal projections onto the range of $P$ and its energy-orthogonal complement, respectively (c.f. \cite{tutorial}), we have that
\begin{align*}
& \|(I - PB_c (P^tAP)^{-1}P^tA)Gy\|_A^2  \\
&\quad = \|TGy\|_A^2 + \|P(B_c - I_c) (P^tAP)^{-1}P^tA)Gy\|_A^2  \\
&\quad \le \|TGy\|_A^2 + \|B_c - I_c\|_{A_c}^2  \|(P^tAP)^{-1}P^tA)Gy\|_{A_c}^2  \\
&\quad = \|TGy\|_A^2 + \|B_c - I_c\|_{A_c}^2  \|(I - T)Gy\|_A^2  \\
&\quad = (1 - \|B_c - I_c\|_{A_c}^2) \|TGy\|_A^2 + \|B_c - I_c\|_{A_c}^2 ( \|TGy\|_A^2 +  \|(I - T)Gy\|_A^2)  \\
&\quad \le (1 - \|B_c - I_c\|_{A_c}^2) \|TG\|_A^2 +  \|B_c - I_c\|_{A_c}^2 < 1.
\end{align*}
Until we treat the multilevel solver, it is probably best for the reader to keep in mind that choosing $B_c = I_c$ reverts to the standard two-grid case.

Assume further that relaxation converges monotonically in energy in the sense that $\|G\|_A < 1$ and that $P$ has full rank with at most $m_P$ nonzero entries per row or column, all of which are positive. We have in mind that $\|G\|_A$ may be very close to $1$ so that relaxation is a poor solver by itself, but that $\rtgstar$ and, therefore, $\|B_c - I_c\|_{A_c}$ and $\|TG\|_A$ are enough less than $1$ that coarse-grid correction effectively eliminates algebraic errors that relaxation cannot properly attenuate. However, these conditions are neither specified nor required in what follows.

The objective of {\tg } is to approximate the exact solution, $A^{-1}r$, of the {\ir } residual equation
\begin{equation}
Ay = r,
\label{ayr}
\end{equation}
starting from the initial approximation $y = 0$. 
Note our use of $A^{-1}r$ here to denote the exact solution of (\ref{ayr}). As mentioned above, we refer to exact quantities by using expressions like this that characterize them, thereby avoiding the need to introduce additional notation to distinguish between exact and computed quantities. So, while $y = 0$ is of course exact, the {\tg } residuals and iterates are assumed to be computed quantities subject to rounding errors when and as specified in what follows.

\section{Multigrid V-cycle}
\label{sec:v-cycle}

To correspond to {\tg } with one relaxation sweep per cycle, we also analyze the so-called V$(1, 0)$-cycle shown in Algorithm~\ref{alg-mg}, where processing begins on the finest grid and proceeds down through the hierarchy to the coarsest grid, with one relaxation sweep performed on each level along the way. Our focus is on one V-cycle defined recursively by the pseudocode {\V } below on a nested hierarchy of $\ell$ grids from the coarsest $j = 1$ to the finest $j = \ell$. Each level $j \in \{1, 2, \dots, \ell\}$ is equipped with a system matrix $A_j$, with $A_\ell = A$. For each $j \in \{2, 3, \dots , \ell\}$, let $P_j$ be the interpolation matrix that maps from grid $j - 1$ to grid $j$ and let $T_j = I_j -  P_j A_{j - 1}^{-1}P_j^tA_j$, where $I_j$ is the level $j$ identity matrix. For the coarsest grid that involves one relaxation sweep and no further coarsening, we set $P_1 = 0$ so that $T_1$ is just the identity matrix $I_1$. Assume that the following {\em Galerkin condition} \cite{tutorial} is exactly satisfied on all coarse levels:
\begin{equation}
A_{j - 1} = P_jA^jP_j, \quad 2 \le j \le \ell.
\label{galerk}
\end{equation}
See \cite{Benzaken2020} for an analysis of the rounding-error effects on computing (\ref{galerk}).

\noindent
\begin{center}
\begin{minipage}{1.0\linewidth}
\begin{algorithm}[H]
\footnotesize
\caption{$\texttt{V$(1,0)$-Cycle (\V) Correction Scheme}$}
\label{alg-mg}
\begin{algorithmic}[1]
\Require A, r, P, $\ell \ge 1$ {\V } levels.
\State $\textcolor{lo}{r \leftarrow} r$\COMMENT{\textcolor{lo}{Round RHS} and Initialize {\V } }
\State \textcolor{lo}{$y \leftarrow M r$}\COMMENT{\textcolor{lo}{Relax on Current Approximation}}
\If {$\ell > 1$}  \COMMENT{Check for Coarser Grid}
\State \textcolor{lo}{$r_{\textrm{v}} \leftarrow Ay - r$}\COMMENT{\textcolor{lo}{Evaluate {\V } Residual}}
\State \textcolor{lo}{$r_{\ell - 1} \leftarrow P^tr_{\textrm{v}}$}\COMMENT{\textcolor{lo}{Restrict {\V } Residual to Coarse-Grid}}
\State \textcolor{lo}{$d_{\ell - 1} \leftarrow $\V$(A_{\ell - 1}, r_{\ell - 1}, P_{\ell - 1}, \ell - 1)$}\COMMENT{\textcolor{lo}{Compute Correction from Coarser Grids}}
\State \textcolor{lo}{$d \leftarrow Pd_{\ell - 1}$}\COMMENT{\textcolor{lo}{Interpolate Correction to Fine Grid}}
\State \textcolor{lo}{$y \leftarrow y - d$}\COMMENT{\textcolor{lo}{Update Approximate Solution of $Ay = r$}}
\EndIf
\State\Return $y$\COMMENT{Return Approximate Solution of $Ay = r$}
\end{algorithmic}
\end{algorithm}
\end{minipage}
\end{center}
\vspace{1em}

The theory in \cite{MandelMcCormickBank1987} and the references cited therein establish optimal energy convergence in infinite precision of Algorithm~\ref{alg-mg} under fairly general conditions for fully regular elliptic PDEs discretized by standard finite elements. We simply assume this to be the case by supposing that the error propagation matrix $V_j$ for level $j$ is bounded by some $\rvstar \in [0, 1)$ for all $j \in \{1, 2, \dots, \ell \}$. Specifically, noting that the error propagation matrices are defined recursively \cite{McCormick1985}) by
\begin{equation}
V_1 = G_1 \textrm{  and  }V_j = (P_j V_{j - 1} A_{j - 1}^{-1}P_j^tA_j + T_j) G_j, \quad 2 \le j \le \ell ,
\label{vb}
\end{equation}
then we assume that
\begin{equation}
\|V_j\|_{A_j} \le \rvstar, \quad 1 \le j \le \ell .
\label{convv}
\end{equation}

Because $\|V_1\|_{A_1} = \|G_1\|_{A_1}$, a requirement implied by (\ref{convv}) is that $\|G_1\|_{A_1} \le \rvstar$. Such a bound holds in cases that use standard multigrid methods with a sufficiently small  coarse grid applied to a well-posed coarse-grid matrix equation. While this case does not include all potential multigrid applications, it is beyond the scope of the present work to consider situations where full coarsening is difficult or the coarse-grid matrices are ill-conditioned.

The principal aim of this paper is an abstract algebraic theory that applies to both algebraic and geometric multigrid when applied to a large class of PDEs. Accordingly, we have in mind matrices whose condition numbers depend on the mesh size, $h$. (While we do not exclude coarsening in terms of the degree, $p$, of the discretization explicitly, our focus is on coarsening in terms of $h$.) To abstract this $h$-dependence, define the {\em pseudo mesh size} $h_j = \kappa^{-\frac{1}{2\emm}} (A_j), \, 1 \le j \le \ell$, where $\emm$ is a positive integer, and the {\em pseudo mesh coarsening factor} 
\begin{equation}
\theta_j =  \frac{h_{j - 1}}{h_j} , \quad 2 \le j \le \ell .
\label{Theta}
\end{equation}
In the geometric setting, $\theta_j$ and $2\emm$ correspond to the mesh-refinement factor and order of the PDE, respectively. Under standard assumptions for finite element discretizations, classical theory shows that the condition number on a given grid is bounded by a constant (depending on the element order) times $h_{min}^{-2m}$, where $h_{min}$ is the smallest element size on that grid (see \cite[Sec.~5.2]{Strang2008}). Our abstract parameter $h_j$ is therefore bounded by that constant times the grid $j$ mesh size. 

To allow a {\em progressive-precision} V-cycle, where precision is tailored to each grid in the hierarchy, assume now that $\ewed$ varies by letting $\ewed_j$ denote the unit roundoff used on level $j, \, 1 \le j \le \ell$. (We use $\ewed$  without subscripts when the level is understood.) Specifically, $\ewed_j$-precision is used on level $j$ to store the data, perform relaxation,  transfer residuals to level $j-1$ and corrections to level $j + 1$, and round residuals transferred from level $j + 1$. Define the {\em precision coarsening factor} by
\begin{equation}
{\dot\zeta}_j = \frac{\ewed_{j - 1}}{\ewed_j}, \, 2 \le j \le \ell.
\label{zeta}
\end{equation}
To accommodate our use of a geometric series involving the rounding-error effects on each level of the V-cycle, let $\cratio  = \min_{1 \le j \le \ell} \{\theta_j {\dot\zeta}_j^{-\frac{1}{m}}\}$. The estimate in Theorem~\ref{theorem3} suggests that $\cratio$ should be substantially larger than $1$ (i.e, ${\dot\zeta}_j \gg \theta_j^m$) so that the V-cycle convergence factors are bounded nicely in terms of the two-grid factors. However, our only formal assumption for our abstract theory is that $\cratio  > 1$.

Only the low precision varies by level in the V-cycle because its finest level is fixed. But FMG's outer loop uses progressively finer grids for the inner loop's finest levels, thus enabling variable $\ewe_j$ and $\eweb_j$, $1 \le j \le \ell$, where $\ell$ is assumed to be the very finest level used in the FMG scheme.

\section{Convergence Theory: Two-grid}
\label{sec:analysis-twogrid}

In what follows, we develop certain bounds that are written in terms of various parameters such as the convergence estimate in (\ref{gamma}) involving $\rtgstar$. Nothing other than what is initially stated about these bounds is assumed until we make conclusions about when and how well {\ir } actually converges. However, useful prototypes for this work are the matrix equations associated with a hierarchy of grids that arise from discretizing a PDE such as the model biharmonic equation treated in \cite{Benzaken2020}. In this case, we have in mind parameters that are fixed constants so that the bounds hold uniformly in the mesh size. Our references to ``optimal'' and ``optimally'' here are only meant to suggest the loose concept that the corresponding bounds and parameters should be nice in some way. For example, $\rtgstar$  should not be very close to $1$. However, in the PDE context, we mean to suggest that these qualifiers also connote a sense that the bounds and parameters hold uniformly in the mesh size. In any event, our aim is to provide a framework that can be used to confirm optimal performance for specific applications.

Several additional parameters appear below in our convergence estimates, such as the condition number of $P^tP$ denoted by $\kappa(P^tP) = \|P^tP\| \cdot \|(P^tP)^{-1}\|$, and
\[
\alp = \frac{m_P}{1 - m_P\ewed} \textrm{\quad and\quad  }\aladot = \frac{m_A + 1}{1 - (m_A + 1)\ewed}.
\]
To account for rounding errors in relaxation, suppose that a constant $\M$ exists such that computing $\m z$ for a vector $z \in \Re^n$ in $\ewed$-precision yields the result
\begin{equation}
\m  z + \delta_M, \qquad \|\delta_M\| \le \M \ewed \|z\|.
\label{mest}
\end{equation}

Our initial two-grid result assumes that finite precision is used only in the transfers between levels. We ignore the initial rounding step and assume that the coarse-grid solve and all fine-grid computations are done exactly. For this lemma and our second theorem, we only analyze one cycle of {\tg }, with initial guess $y = 0$.  The exact initial algebraic error is therefore just $y - A^{-1}r = - A^{-1}r$, so relaxation yields the new iterate $-\m r$ with error $- GA^{-1}r$, and the {\tg } residual to be transferred to the coarse grid is just $-AGA^{-1}r$. To accommodate progressive precision, assume that the coarse level uses $\ewed_c = {\dot\zeta} \ewed$ precision, where ${\dot\zeta} \ge 1$ (see (\ref{zeta})). 

 \begin{lem}{\em Limited \tg.}
Consider a limited version of the two-grid correction scheme, where all computations are in infinite precision except for $\ewed_c$-precision computation of $b_c$ in step 4 and  $d$ in step 6 of Algorithm~\ref{alg-tg}. Then the result of one such limited {\tg } cycle yields a result $y$ with error $y - A^{-1}r$ that satisfies
\[
\|y - A^{-1}r\|_A \le \rltg \|A^{-1}r\|_A,
\]
where $\rltg = \rtgstar + \pltg, \, \pltg = \rtgstar \ead + 3\ead  + 2\ead^2, \textrm{ and }
\ead = 3 {\dot\zeta} \kappa^\frac{1}{2}(P^tP) \alp \pvard.$
\label{lemma}
 \end{lem}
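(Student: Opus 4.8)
The plan is to track how the two rounding steps (step~4, restriction to the coarse grid, and step~6, interpolation back to the fine grid) perturb the exact two-grid iterate, and to bound each perturbation in the energy norm by a multiple of $\ead\|A^{-1}r\|_A$. Since all other computations are exact by hypothesis, the error $y - A^{-1}r$ is the exact coarse-grid-correction error, which by~(\ref{gamma}) has norm at most $\rtgstar\|A^{-1}r\|_A$, plus contributions from the two rounding errors propagated through the exact parts of the algorithm.

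First I would set up the bookkeeping. With initial guess $y = 0$, relaxation gives $y = -Mr$ (exact) and the exact {\tg } residual is $r_{\textrm{tg}} = AGA^{-1}r$ in the sense that $Ay - r = -AGA^{-1}r$. Step~4 computes $b_c = P^t r_{\textrm{tg}}$ in $\ewed_c$-precision, introducing an error $\delta_4$; using the matrix-vector estimate~(\ref{fl5a}) with $m_P$ nonzeros per row and $\ewed_c = {\dot\zeta}\ewed$, I would bound $\|\delta_4\|$ (Euclidean) in terms of ${\dot\zeta}\alpnodot\ewed\|P^t\|\cdot\|r_{\textrm{tg}}\|$, or more precisely $\||P^t|\cdot|r_{\textrm{tg}}|\|$. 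Step~5 (exact) restricts, step~5 of the algorithm's coarse solve (exact) applies $A_c^{-1} = (P^tAP)^{-1}$, step~6 interpolates with a second rounding error $\delta_6$ bounded similarly by ${\dot\zeta}\alpnodot\ewed\||P|\cdot|d_c|\|$, and step~7 (exact) subtracts. The net effect is
\[
y - A^{-1}r = -\bigl(I - P(P^tAP)^{-1}P^tA\bigr)GA^{-1}r \;-\; P(P^tAP)^{-1}\delta_4 \;-\; \delta_6 .
\]
The first term is controlled by~(\ref{gamma}). For the second and third terms I need to convert Euclidean bounds into energy bounds and pull out the factor $\kappa^{1/2}(P^tP)$: the key identities are $\|P(P^tAP)^{-1}P^t\|_A$-type estimates and the fact that $\|Pz\|_A = \|z\|_{A_c}$, together with $\|A^{1/2}P\|\le \|A^{1/2}\|\cdot\|P\|$, $\|P\|^2 = \|P^tP\|$, and $\|(P^tP)^{-1}\|$ appearing when one undoes the coarse inner product. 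Absorbing $\|r_{\textrm{tg}}\|$ and $\|r\|$ back into $\|A^{-1}r\|_A$ costs a factor of $\kappa^{1/2}(A)$ each time (since $\|w\|\le\|A^{1/2}\|\cdot\|A^{-1/2}w\|$ and $\|A^{-1}r\| \le \|A^{-1/2}\|\cdot\|A^{-1}r\|_A$ after noting $r = A(A^{-1}r)$), and I expect the three separate $\ead$-contributions — one from $\delta_4$ propagated through coarse solve and prolongation, and from the $\|d_c\|$ in $\delta_6$ being itself bounded by the corrected iterate — to aggregate into the stated $\pltg = \rtgstar\ead + 3\ead + 2\ead^2$, where the $\ead^2$ term comes from $\delta_6$ bounding a quantity that already contains a $\delta_4$-perturbation.

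The main obstacle will be the careful norm conversions that produce exactly the constant $3$ inside $\ead = 3{\dot\zeta}\kappa^{1/2}(P^tP)\alp\pvard$ and the precise coefficients $\rtgstar\ead + 3\ead + 2\ead^2$: the componentwise bounds from~(\ref{fl5a}) involve $|P|$ and $|P^t|$ acting on absolute-value vectors, and since $P$ has nonnegative entries one has $|P|=P$, which lets $\||P|\cdot|z|\| \le \|P\|\cdot\|z\|$ cleanly, but the triangle-inequality splitting of $|r_{\textrm{tg}}|$ and $|d_c|$ into pieces bounded by $\|A^{-1}r\|_A$ must be done so that the cross terms land in the right place. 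A secondary subtlety is handling the coarse-grid operator $(P^tAP)^{-1}$: one must use $\|P(P^tAP)^{-1}P^tA\,GA^{-1}r\|_A = \|(I-T)GA^{-1}r\|_A \le \|GA^{-1}r\|_A \le \|A^{-1}r\|_A$ to keep the propagated-$\delta_4$ term from picking up an extra $\kappa$ factor, exploiting that $I-T$ is an energy-orthogonal projection as established just before~(\ref{gamma}). Once those conversions are pinned down, assembling the final inequality is routine.
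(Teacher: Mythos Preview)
Your direct-decomposition strategy is sound and would recover the same bound, but the paper organizes the argument differently: it treats three cases (restriction-only rounding, interpolation-only rounding, then both) and obtains Case~3 by substituting the Case~1 factor $\rho_1 = \rtgstar + 2\ead$ for $\rtgstar$ in the Case~2 factor $\rho_2 = (1+\ead)\rtgstar + \ead$, yielding $(1+\ead)(\rtgstar+2\ead)+\ead = \rtgstar + \rtgstar\ead + 3\ead + 2\ead^2$. This modular structure is then reused in Theorem~\ref{theorem1}, where Case~1 is redone with an extra leading constant and Case~2 is applied unchanged.

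There are two substantive gaps in your sketch. First, you omit $B_c$ from step~5: the propagated restriction-error term is $PB_c(P^tAP)^{-1}\delta_4$, and the paper uses $\|B_c\|_{A_c}\le 1+\|B_c-I_c\|_{A_c}<2$ to pull out a factor of~$2$, which is precisely where the~$2$ in $2\ead$ (and hence in $2\ead^2$) comes from. Second, and more seriously, the projection identity $\|P(P^tAP)^{-1}P^tA\,w\|_A=\|(I-T)w\|_A\le\|w\|_A$ does \emph{not} control $\|P(P^tAP)^{-1}\delta_4\|_A$: the error $\delta_4$ is a coarse-grid vector with no $P^tA$ factor to absorb. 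The paper instead proves the eigenvalue inequality $\|(P^tAP)^{-1/2}\|\le\|A^{-1/2}\|\,\|(P^tP)^{-1/2}\|$ by comparing Rayleigh quotients on a minimal coarse eigenvector, and this is what produces $\kappa^{1/2}(P^tP)\kappa^{1/2}(A)$ without an extra power of $\kappa(A)$. Your projection remark is closer in spirit to what is needed for the \emph{other} term, bounding $\|Pd_c\|_A$ inside the $\delta_6$ estimate, but even there the paper does it more simply via $\|Pd_c\|_A\le\|e^{(1/2)}-Pd_c\|_A+\|e^{(1/2)}\|_A$.

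A smaller point: the constant~$3$ in $\ead$ comes from modeling step~4 as a compute-then-round operation in the style of~(\ref{fl6}), not the single-precision estimate~(\ref{fl5a}) you cite; the paper takes the loose bound $\ewed_c+(1+\ewed_c)\alp\ewed\le 3\dot\zeta\alp\ewed$.
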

 \begin{proof}
Superscripts $(0), (\frac{1}{2}),$ and $(1)$ are used here to keep track of the errors $e$ and iterates $y$, with $e^{(0)} = - A^{-1}r$ and $y^{(0)} = 0$. The proof proceeds by treating in turn three cases based on where rounding error is assumed to occur:
\begin{itemize}
\item{\em Case 1 (Restriction)}: All computations are in infinite precision except for finite-precision computation of $b_c$ in step 4 of \tg.
\item{\em Case 2 (Interpolation)}: All computations are in infinite precision except for finite-precision computation of $d$ in step 6 of \tg.
\item{\em Case 3 (Restriction and Interpolation)}: All computations are in infinite precision except for finite-precision computations in steps 4 and 6 of \tg.
\end{itemize}

Beginning with {\em Case 1}, let each entry of the vector $\delta$ be the error in computing the corresponding entry of $P^tr_{\textrm{tg}}$. This task is just an inner product between $P$'s $i^{\textrm{th}}$ column and $r_{\textrm{tg}}$, which, appealing to just the nonzero entries of $P$, are vectors of length at most $m_P$. In analogy to (\ref{fl6}) and remembering that $P$'s entries are nonnegative, $b = 0$, and $r_{\textrm{tg}} = AGe^{(0)}$ is exact, we have the loose bound
\[
|\delta|  \le \left(\ewed_c + (1 + \ewed_c) \alp \ewed \right) P^t |AGe^{(0)}| \le 3 {\dot\zeta} \alp \ewed P^t |AGe^{(0)}|.
\]
Since $ \|P^t\| =  \|P\| = \|P^tP\|^\frac{1}{2}$, we therefore have that
\begin{equation}
\|\delta\| \le 3 {\dot\zeta} \alp \ewed \|P^tP\|^\frac{1}{2} \|AGe^{(0)}\| \le 3 {\dot\zeta} \alp \ewed \|P^tP\|^\frac{1}{2} \|A\|^\frac{1}{2} \|Ge^{(0)}\|_A.
\label{trans}
\end{equation}
Since $\|Ge^{(0)}\|_A \le \|e^{(0)}\|_A$ by assumption, then
\begin{equation}
\|\delta\| \le 3 {\dot\zeta} \alp \ewed \|P^tP\|^\frac{1}{2} \|A\|^\frac{1}{2} \|e^{(0)}\|_A.
\label{delta}
\end{equation}
Remembering that $y^{(\frac{1}{2})} = \m r^{(0)}$ is the intermediate iterate formed by relaxation on $y^{(0)} = 0$, note that the final computed update in the {\tg } cycle is given by
\begin{align*}
y^{(1)} &= y^{(\frac{1}{2})} - d \\
&= y^{(\frac{1}{2})} - P B_c (P^tAP)^{-1}(P^t AGe^{(0)} + \delta) \\
&= \underbrace{y^{(\frac{1}{2})} - P B_c (P^tAP)^{-1}P^t AGe^{(0)}}_\textrm{exact update} -  \underbrace{P B_c (P^tAP)^{-1}\delta}_\textrm{propagated error}.
\end{align*}
Subtracting the exact solution, $A^{-1}r^{(0)}$, of the {\ir } residual equation from both sides and noting that $e^{(\frac{1}{2})} = Ge^{(0)}$ yields
\begin{align*}
e^{(1)} &= e^{(\frac{1}{2})} - P B_c (P^tAP)^{-1}P^t AGe^{(0)} - P B_c (P^tAP)^{-1}\delta\\
&= (I - P B_c (P^tAP)^{-1}P^t A)Ge^{(0)} - P B_c (P^tAP)^{-1}\delta .
\end{align*}
Taking energy norms of both sides yields
\begin{align}
\|e^{(1)}\|_A &\le \|(I - P B_c (P^tAP)^{-1}P^t A)Ge^{(0)}\|_A + \| B_c (P^tAP)^{-1}\delta\|_{A_c}\nonumber \\
&\le \|(I - P B_c (P^tAP)^{-1}P^t A)Ge^{(0)}\|_A + \| B_c \|_{A_c} \| (P^tAP)^{-1}\delta\|_{A_c}\nonumber \\
&\le \|(I - P B_c (P^tAP)^{-1}P^t A)Ge^{(0)}\|_A + 2 \|(P^tAP)^{-\frac{1}{2}}\|\cdot \|\delta\|.
\label{e1temp}
\end{align}
But $\|(P^tAP)^{-\frac{1}{2}}\| \le  \|A^{-\frac{1}{2}}\| \cdot  \|(P^tP)^{-\frac{1}{2}}\| $ follows from noting that if $z \ne 0$ is a coarse-grid eigenvector of $P^tAP$ belonging to the smallest eigenvalue, $\lambda_c$, of $P^tAP$, then the smallest eigenvalue, $\lambda$, of $A$ satisfies
\[
\lambda
\le \frac{\langle APz,Pz\rangle}{\langle Pz,Pz\rangle}
= \frac{\langle APz,Pz\rangle}{\langle z,z\rangle} \cdot \frac{\langle z,z\rangle}{\langle Pz,Pz \rangle }
= \lambda_c \frac{\langle (P^tP)^{-1}Pz,Pz\rangle}{\langle Pz,Pz \rangle }
\le \lambda_c \|(P^tP)^{-1}\|.
\]
Thus, (\ref{gamma}), (\ref{delta}), and (\ref{e1temp}) combine to yield
\begin{equation}
\|e^{(1)}\|_A \le \left(\rtgstar + 2\kappa^\frac{1}{2} (P^tP) \kappa^\frac{1}{2} (A) 3 {\dot\zeta} \alp \ewed \right) \|e^{(0)}\|_A = (\rtgstar + 2\ead)\|e^{(0)}\|_A.
\label{e0}
\end{equation}
The energy convergence factor for {\em Case 1} is therefore bounded by $\ra = \rtgstar + 2\ead$. 

Consider now {\em Case 2}. The symbols we use here are the same as before, but defined differently now to suit this case. Accordingly, let $\delta$ be the error in the correction term $d$ incurred due to $\ewed_c$-precision interpolation of $d_c$ in step 6 of \tg. From (\ref{fl5a}), we then have that $|\delta|  \le  \alp \ewed_c P |d_c|,$ which yields a bound similar to but simpler than (\ref{delta}) because there is no need for rounding: $\|\delta\| \le  \alp \ewed_c \|P^tP\|^\frac{1}{2} \|d_c\|.$ The squared relative energy norm of the error caused by this computed correction is then bounded loosely as follows:
\begin{equation}
\frac{\langle A \delta, \delta\rangle }{\langle APd_c,Pd_c\rangle }
\le \|A\|\cdot \|(P^tAP)^{-1}\|\frac{\|\delta\|^2 }{\|d_c\|^2} \le \kappa(P^tP) \kappa(A) ({\dot\zeta}\alp)^2 \ewed^2 \le \ead^2.
\label{perr}
\end{equation}

With $e^{(0)}$ and $e^{(1)}$ again denoting the respective initial and final errors and $e^{(\frac{1}{2})}$ the error after relaxation, then the computed coarse-grid update becomes
\[
e^{(1)} = \underbrace{e^{(\frac{1}{2})} - P d_c}_\textrm{exact update} + \underbrace{\delta}_\textrm{$\ewed_c$ error}.
\]
Here we invoke the only property of $d_c$ that we need, that is, that its interpolation and correction to the fine-grid iterate reduces the error by a factor of at least $\rtgstar$:
\[
\|e^{(\frac{1}{2})} - Pd_c\|_A \le \rtgstar \|e^{(0)}\|_A.
\]
We thus obtain the following bound on the energy convergence factor:
\begin{align*}
\|e^{(1)}\|_A &= \|e^{(\frac{1}{2})} - P d_c - \delta\|_A\\
&\le  \|e^{(\frac{1}{2})} - P d_c\|_A + \|\delta\|_A\\
&\le \rtgstar \|e^{(0)}\|_A + \ead \|Pd_c\|_A\\
&= \rtgstar \|e^{(0)}\|_A + \ead \|e^{(\frac{1}{2})} - Pd_c - e^{(\frac{1}{2})}\|_A\\
&\le \rtgstar \|e^{(0)}\|_A + \ead\left(\rtgstar \|e^{(0)}\|_A + \|e^{(\frac{1}{2})}\|_A\right)\\
&\le \left(\left(1 + \ead \right) \rtgstar  + \ead\right) \|e^{(0)}\|_A.
\end{align*}

To summarize, the result for {\em Case 1} is that the two-grid solver converges in energy with factor bounded by $\ra = \rtgstar + 2\ead$, while for {\em Case 2} the factor is bounded by $\rb = \left(1 + \ead \right) \rtgstar  + \ead$. But {\em Case 2} made no use of the specific form of $d_c$: the only property of $z$ that was actually used was that it led to a reduction in the energy norm by a factor bounded by $\rtgstar$. So we can easily analyze {\em Case 3} by just replacing $\rtgstar$ by $\ra$ in the expression for $\rb$, thus proving the lemma.

\end{proof}

To rein in complexity in what follows, we are occasionally loose with upper bounds. Accordingly, for the two-grid estimate, let $\cc$ be any constant such that
\begin{equation}
\cc \ge (1 + \ewed) \max \{ \M \|A\|, \A \M, \A \|M\|\}\}.
\label{simpl}
\end{equation}

\begin{thm}{\em \tg.}
One cycle of Algorithm~\ref{alg-tg} applied to (\ref{ayr}) converges according to
\begin{equation}
\|y - A^{-1}r\|_A \le \rtg \|A^{-1}r\|_A, \quad \rtg = \rtgstar + \ptg,
\label{conv}
\end{equation}
provided $\pvard$ is small enough that $\ptg < 1 - \rtgstar$, where $\ptg = \ptg(\pvard)$ written as its linear part plus higher-order  (quadratic and cubic) terms $\hot = \hot(\pvard)$ is given by
\begin{align}
\ptg &= 4\pvard + (2 + \bet) \ead + \hot, \quad
\hot = 2\pvard^2 + (4 + \bet)\ead \pvard + 2 \ead \pvard^2, \nonumber \\
\ead &= 3 {\dot\zeta} \kappa^\frac{1}{2}(P^tP)  \alp \pvard,  \quad \bet = 2 + 3 \cc + 2\aladot (1 + \cc).
\label{ea}
\end{align}
\label{theorem1}
\end{thm}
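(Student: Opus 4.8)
The plan is to build up the full two-grid cycle from the analysis already developed, adding the error contributions from the components that Lemma~\ref{lemma} did not treat: the initial rounding of the RHS $r$, the finite-precision relaxation step ($y \leftarrow Mr$), and the finite-precision evaluation of the two-grid residual ($r_{\textrm{tg}} \leftarrow Ay - r$). Lemma~\ref{lemma} already accounts for the restriction and interpolation transfers and produces the factor $\rltg = \rtgstar + \pltg$ with $\pltg = \rtgstar\ead + 3\ead + 2\ead^2$, so the task here is to show that the additional rounding in steps 1, 2, and 3 of Algorithm~\ref{alg-tg} perturbs this bound by the linear term $4\pvard$ plus the extra $\bet\ead$ and higher-order corrections recorded in (\ref{ea}). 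I would keep the initial guess $y = 0$, so the exact starting error is $-A^{-1}r$, and track the computed iterates through the cycle.

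The key steps, in order: (1)~\emph{Initial rounding.} Writing the rounded RHS as $r + \delta_0$ with $|\delta_0| \le \ewed|r|$, bound $\|A^{-1}\delta_0\|_A = \|A^{-1/2}\delta_0\| \le \kappa^{1/2}(A)\ewed\|A^{-1}r\|_A = \pvard\|A^{-1}r\|_A$, giving one $\pvard$-type contribution. (2)~\emph{Relaxation error.} Using (\ref{mest}), the computed relaxation step is $Mr + \delta_M$ with $\|\delta_M\| \le \M\ewed\|r\|$; convert to the energy norm via $\|\delta_M\|_A \le \|A^{1/2}\|\,\M\ewed\|r\| \le \|A^{1/2}\|\,\M\ewed\|A\|\cdot\|A^{-1}r\|$, which is $\bigO(\M\|A\|\ewed\kappa^{1/2}(A))\|A^{-1}r\|_A$, absorbed into a $\cc\pvard$-scale term by the definition (\ref{simpl}) of $\cc$. (3)~\emph{Residual evaluation.} Using (\ref{fl5b}) (or its $\eweb$-free analogue with $\aladot$) for $\fl(Ay - r)$, the error is bounded componentwise by $\aladot\ewed(|r| + |A|\cdot|y|)$; then bound $\|A^{-1/2}(|r| + |A|\cdot|y|)\|$ using $\||A|\cdot|y|\| \le \A\|y\|$, $\|y\| \le \|A^{-1}\|\cdot\|Ay\|$ and the fact that $y$ is close to $A^{-1}r$, so this also reduces to a $\pvard$-scale term times $\|A^{-1}r\|_A$, with $\A$, $\|M\|$ factors controlled again by $\cc$. (4)~\emph{Composition.} Propagate each of these perturbations through the remaining (now exact, by the reduction) two-grid operations—crucially the coarse-grid correction $I - PB_c(P^tAP)^{-1}P^tA$ composed with $G$, whose energy norm is $\le \rtgstar$ by (\ref{gamma}), and the projections $T$, $I-T$ which are energy-orthogonal—so that each added error enters $\|e^{(1)}\|_A$ either directly or multiplied by $\rtgstar$ or $\ead$. (5)~\emph{Collect and simplify.} Add the Lemma~\ref{lemma} contribution $\pltg$, the three new $\pvard$-scale contributions, and the cross terms; the linear parts sum to $4\pvard + (2+\bet)\ead$ once one defines $\bet = 2 + 3\cc + 2\aladot(1+\cc)$ to collect the coefficients of $\ead$ coming from relaxation, residual evaluation, and the Lemma, and the remaining products of two small quantities are gathered into $\hot = 2\pvard^2 + (4+\bet)\ead\pvard + 2\ead\pvard^2$. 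Finally, the hypothesis $\ptg < 1 - \rtgstar$ guarantees $\rtg < 1$.

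The main obstacle will be step (4), the bookkeeping of how each elementary rounding error propagates through the composition of relaxation, residual formation, restriction, coarse solve, and interpolation without double-counting and while keeping the constants honest—in particular, deciding which errors pick up a factor of $\rtgstar$ (those entering before the coarse-grid correction acts) versus those that do not (e.g., the final update rounding), and making sure the energy-norm conversions at each transfer use the correct combination of $\kappa^{1/2}(A)$, $\kappa^{1/2}(P^tP)$, $\alp$, and $\aladot$. The arguments from Lemma~\ref{lemma}'s Cases~1--3 already give the template for handling the transfer errors and the key inequality $\|(P^tAP)^{-1/2}\| \le \|A^{-1/2}\|\cdot\|(P^tP)^{-1/2}\|$; the new work is just to thread the three additional fine-grid errors through the same machinery and to verify that the definition of $\cc$ in (\ref{simpl}) is exactly what is needed to bound the relaxation- and residual-related constants uniformly, so that everything collapses into the compact form (\ref{ea}).
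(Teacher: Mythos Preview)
Your proposal is correct and follows essentially the same approach as the paper: build on Lemma~\ref{lemma}, then add the rounding contributions from the initial RHS rounding, relaxation, residual evaluation, and the final subtraction, and collect terms. Two organizational points from the paper will sharpen your bookkeeping in step~(4): first, rather than tracking the relaxation error $\delta_M$ and residual-evaluation error $\delta_1$ separately through the coarse-grid correction, the paper combines them into a single perturbation $\delta_2 = A\delta_M + \delta_1$ of the exact residual $AGe^{(0)}$, which simply rescales the Lemma's Case~1 bound by a factor $\beta_0 = 1 + \alpha_M\|A\| + \aladot(1 + \A\|M\| + \A\alpha_M\ewed)$, so that $\ead \to \beta_0\ead$ there while Case~2 is unchanged; second, $\delta_M$ plays a \emph{dual} role---it perturbs the residual that is restricted (hence enters $\beta_0$) and it is also the discrepancy between the computed iterate $y^{(1/2)}$ and the exact $Mr^{(0)}$, so it reappears additively in the final update error alongside the subtraction error $\delta_4$. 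Your heuristic that errors entering before the coarse-grid correction ``pick up a factor of $\rtgstar$'' is not quite the right picture: such errors instead perturb the coarse RHS and are bounded via $\|(P^tAP)^{-1/2}\|$ and $\|P\|$ exactly as in Case~1, which is why they land in the $\ead$ (not $\rtgstar$) column.
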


\begin{proof}
The {\tg } cycle in Algorithm~\ref{alg-tg} uses $\ewed$-precision for all finite-precision computations. Its objective is to approximate the solution $A^{-1}r$ of (\ref{ayr}). Using superscripts as in the previous proof, we first estimate how the {\tg } cycle approximates the solution $A^{-1}r^{(0)}$ of $A y = r^{(0)}$, where $r^{(0)} = r + \delta_0$, $\|\delta_0\| \le  \ewed \|r\|$, is $r$ rounded to $\ewed$-precision. Specifically, this proof is mostly devoted to showing that the final computed approximation $y^{(1)}$ satisfies
\begin{equation}
\|y^{(1)} - A^{-1}r^{(0)}\|_A \le \rtghat \|A^{-1}r^{(0)}\|_A, \quad \rtghat  = \rtgstar + \ptghat,
\label{conv1}
\end{equation}
where 
\begin{equation}
\ptghat = (\pvard + \ead + \ead \pvard) (\rtgstar + 2\bet_0\ead) + 2\bet_0\ead + (1 + \pvard) \ead + (1 + \ewed)\M \|A\|\ewed + \pvard,
\label{rho1}
\end{equation}
with $\bet_0 = 1 + \M \|A\| + \aladot\left(1 + \A \EM + \A \M \ewed \right)$.
We would be done if we could establish (\ref{conv1})-(\ref{rho1}) because we could then use (\ref{gamma}) and the observations that
\[
2\bet_0\ead + (1 + \ewed)\M \|A\|\ewed \le 2\bet_0\ead + (1 + \ewed)\M \|A\|\pvard \le \bet \ead
\]
and $\rtgstar + \bet\ead \le \rtg < 1$ to prove (\ref{conv}) as follows:
\begin{align*}
\|y^{(1)} - A^{-1}r\|_A &\le \|y^{(1)} - A^{-1}r^{(0)}\|_A + \|A^{-1}(r - r^{(0)})\|_A \\
&\le \rtghat \|A^{-1}r^{(0)}\|_A + \|A^{-1}(r - r^{(0)})\|_A \\
&\le \rtghat \|A^{-1}r\|_A + (1 + \rtghat)\|A^{-1}(r - r^{(0)})\|_A \\
&\le \rtghat \|A^{-1}r\|_A + (1 + \rtghat) \|A\|^\frac{1}{2}  \ewed \|r\| \\
&\le \rtghat \|A^{-1}r\|_A +  (1 + \rtghat) \pvard \|A^{-1}r\|_A \\
&= \left(\rtgstar + (1 + \rtgstar) \pvard + (1 + \pvard)\left[(\pvard + \ead + \ead \pvard) (\rtgstar + 2\bet_0\ead) \right.\right. \\
 & \quad \left.\left. + 2\bet_0\ead + (1 + \pvard) \ead + (1 + \ewed)\M \|A\|\ewed + \pvard\right]\right) \|A^{-1}r\|_A \\
&\le \left(\rtgstar + 2 \pvard + (1 + \pvard)\left[\pvard + \ead + \ead \pvard + \bet\ead + (1 + \pvard) \ead + \pvard\right]\right) \|A^{-1}r\|_A \\
&= \rtg \|A^{-1}r\|_A.
\end{align*}

To establish (\ref{conv1})-(\ref{rho1}), we begin by accounting for the rounding-error effects in relaxation in step 2 and residual computation in step 3. Since $y^{(0)} = 0$, (\ref{ayr}) becomes $Ay = - Ae^{(0)}$, where the error in $y^{(0)}$ is $e^{(0)} = y^{(0)} - A^{-1}r^{(0)} = - A^{-1}r^{(0)}$. Using $y^{(0)} = 0$ again shows that relaxation computed in $\ewed$-precision yields
\begin{equation}
y^{(\frac{1}{2})} = \underbrace{\m r^{(0)}}_\textrm{exact iterate} + \underbrace{\delta_M}_\textrm{$\ewed$ error}, \quad \|\delta_M\| \le \M \ewed \|r^{(0)}\|.
\label{y12}
\end{equation}
The next step is to compute the {\tg } residual in $\ewed$-precision for transfer to the coarse grid. Using (\ref{fl5b}), we have that
\begin{equation}
r^{(\frac{1}{2})} =  \underbrace{Ay^{(\frac{1}{2})} - r^{(0)}}_\textrm{exact residual} +  \underbrace{\delta_1}_\textrm{$\ewed$ error}, \quad
|\delta_1| \le \aladot \ewed \left(|r^{(0)}| + |A| \cdot |y^{(\frac{1}{2})}|\right).
\label{resy12}
\end{equation}
But the algebraic errors $y^{(\frac{1}{2})} - A^{-1} r^{(0)}$ and $e^{(0)} = y^{(0)} - A^{-1} r^{(0)}$ are related according to $y^{(\frac{1}{2})} - A^{-1} r^{(0)} = Ge^{(0)} + \delta_M$, so $A y^{(\frac{1}{2})} - r^{(0)} = A(Ge^{(0)} + \delta_M)$. Letting $\delta_2 = A \delta_M + \delta_1$, we can therefore rewrite the computed {\tg } residual as
\[
r^{(\frac{1}{2})} = \underbrace{AGe^{(0)}}_\textrm{exact residual} + \underbrace{\delta_2}_\textrm{propagated $\ewed$ error},
\]
where
\[
|\delta_2| \le |A\delta_M| + |\delta_1| \le |A\delta_M| + \aladot \ewed \left(|r^{(0)}| + |A| \cdot |\m r^{(0)} + \delta_M|\right).
\]
By (\ref{y12}), the observation that $\|r^{(0)}\| = \|Ae^{(0)}\| \le  \|A\|^\frac{1}{2} \|e^{(0)}\|_A$, other similar matrix norm bounds, and the triangle inequality, we then have that
\begin{align*}
\|\delta_2\| &\le \M \ewed \|A\| \cdot  \|r^{(0)}\| +
\aladot \ewed \left(1 + \A \EM + \A \M \ewed \right)\|r^{(0)}\| \\
&\le \left(\M \ewed \|A\| + \aladot \ewed \left(1 + \A \EM + \A \M \ewed \right)\right) \|A\|^\frac{1}{2}\|e^{(0)}\|_A.
\end{align*}
For the rounding error $\delta_3$ incurred in transferring the computed residual to the coarse grid, we can thus mimic (\ref{trans})-(\ref{delta}) with $\delta$ replaced by $\delta_3$ and $AGe^{(0)}$ replaced by $AGe^{(0)} + \delta_2$ to obtain
\[
\|\delta_3\| \le \alp \ewed \|P^tP\|^\frac{1}{2} \|AGe^{(0)} + \delta_2\|,
\]
which leads directly to the loose bound
\begin{equation}
\|\delta_3\| \le \bet_0 \alp \ewed \|P^tP\|^\frac{1}{2} \|A\|^\frac{1}{2} \|e^{(0)}\|_A 
\le \bet_0 3 {\dot\zeta} \alp \ewed \|P^tP\|^\frac{1}{2} \|A\|^\frac{1}{2} \|e^{(0)}\|_A.
\label{gamma3}
\end{equation}
Note that (\ref{gamma3}) is just (\ref{delta}) with the extra leading factor $\bet_0$. We can therefore proceed as we did after (\ref{delta}), but with $\ead$ replaced by $\bet_0\ead$ and the convergence factor corresponding to Case 1 of Lemma \ref{lemma} now reading $\ra =  \rtgstar + 2\bet_0\ead$. The rest of the lemma applies directly with $\ead$ unchanged, so letting $d$ denote the computed correction from the coarse grid, we could then conclude that the exact update $\m r^{(0)} - d$ of the exact intermediate iterate $\m r^{(0)}$ converges to $A^{-1}r^{(0)}$ in energy with factor bounded by $\rc = \left(1 + \ead \right) (\rtgstar + 2\bet_0\ead) + \ead $:
\begin{equation}
\|\m r^{(0)} - d - A^{-1}r^{(0)}\|_A \le \rc \|e^{(0)}\|_A.
\label{rho3}
\end{equation}
However, to bound the error in the computed quantities, we must now account for rounding errors in $y^{(\frac{1}{2})}$ due to the use of $\m $ and in the subtraction $y^{(\frac{1}{2})} - d$.

To this end, note that the error in the computed iterate $y^{(1)}$ can be written as
\[
e^{(1)} = y^{(1)} - A^{-1}r^{(0)} = (\m r^{(0)} + \delta_M - d)(1 + \delta_4) - A^{-1}r^{(0)},
\]
where the rounding error $\delta_4$ due to the subtraction satisfies $|\delta_4| \le \ewed $ (see (\ref{fl1})). Taking energy norms of both sides and rearranging terms yields
\begin{align}
\|e^{(1)}\|_A &= \|(\m r^{(0)} - d - A^{-1}r^{(0)})(1 + \delta_4) + \delta_M(1 + \delta_4) + \delta_4 A^{-1}r^{(0)}\|_A \nonumber\\
&\le \|(\m r^{(0)} - d - A^{-1}r^{(0)})(1 + \delta_4)\|_A + \|\delta_M(1 + \delta_4)\|_A + \|\delta_4 A^{-1}r^{(0)}\|_A .
\label{bnd1}
\end{align}
Using (\ref{rho3}), the first term on the right is bounded according to
\begin{align}
\|(\m r^{(0)} &- d - A^{-1}r^{(0)})(1 + \delta_4)\|_A \nonumber\\
&\le \|(\m r^{(0)} - d - A^{-1}r^{(0)})\|_A + \|(\m r^{(0)} - d - A^{-1}r^{(0)})\delta_4\|_A \nonumber\\
&\le \rc\|e^{(0)}\|_A + \|A^\frac{1}{2}\|\cdot\|(\m r^{(0)} - d - A^{-1}r^{(0)})\delta_4\| \nonumber\\
&\le \rc\|e^{(0)}\|_A + \ewed \|A^\frac{1}{2}\|\cdot\|\m r^{(0)} - d - A^{-1}r^{(0)}\| \nonumber\\
&\le \rc\|e^{(0)}\|_A + \pvard\|\m r^{(0)} - d - A^{-1}r^{(0)}\|_A \nonumber\\
&\le \left(1 + \pvard\right)\rc\|e^{(0)}\|_A.
\label{bnd2}
\end{align}
Since $r^{(0)} = -Ae^{(0)}$, then the remaining two terms are bounded according to
\begin{align}
\|\delta_M(1 + \delta_4)\|_A + \|\delta_4 A^{-1}r^{(0)}\|_A
&\le \|A^\frac{1}{2}\|\left(\|\delta_M(1 + \delta_4)\| + \|\delta_4 A^{-1}r^{(0)}\|\right) \nonumber\\
&\le (1 + \ewed) \ewed \M \|A^\frac{1}{2}\| \cdot \|r^{(0)}\| + \pvard\|A^{-\frac{1}{2}}r^{(0)}\| \nonumber\\
&\le \left((1 + \ewed) \ewed \M \|A\| + \pvard\right)\|e^{(0)}\|_A .
\label{bnd3}
\end{align}
Bounds (\ref{bnd1})-(\ref{bnd3}) combine to establish (\ref{conv1})-(\ref{rho1}) and thus prove the theorem.

\end{proof}

\begin{rem}{\em Mixed Precision Rationale.}
Theorems \ref{theorem2} and \ref{theorem1} suggest why mixed precision is needed and why it works. Both the inner and outer iterations compute residuals: {\tg } in (\ref{resy12}) and {\ir } in (\ref{r0}). However, while the estimates for {\tg } only involve $\kappa^\frac{1}{2}(A)$ so that low precision suffices, the appearance of $\kappa(A)$ in the estimates for {\ir } indicates its need for high precision. The key point here is that because {\tg } starts with a zero initial guess, the {\tg } iterate is just $M$ times the RHS of its target equation (\ref{ayr}). Successive {\ir } iterates can of course be written in terms of $b$ in (\ref{axb}), but that expression becomes increasingly complex as the iterations proceed. Accumulation of the many calculations over successive {\ir } iterations accounts for the need for high precision and explains the success of the mixed-precision solver.
\end{rem}

\section{Convergence Theory : Multigrid V-cycle}
\label{sec:analysis-mg}

Our next theorem extends Theorem~\ref{theorem1} to the multilevel case with progressive precision by confirming that one V-cycle reduces the error optimally toward the solution of the target equation $Ay = r$ provided the derived expression for $\pv$ is less than $1 - \rvstar$. This proviso means that coarsening in the grid hierarchy should be fast enough and progression of the precision should be slow enough that the factors in (\ref{Theta}) and (\ref{zeta}), respectively, should be small enough to ensure that $\cratio  \gg 1$. It further requires that $\kappa(A_j) \ll \ewed_j^{-2}, \, 1 \le j \le \ell$. Theorem~\ref{theorem2} then confirms that the mixed-precision version of {\ir } with a V-cycle as the inner loop converges optimally to the solution of (\ref{axb}) until to the order of the limiting accuracy $\thresh$ is reached. 

To proceed, we need to make assumptions about the parameters in (\ref{ea}) and how they behave on all grid levels. Writing $\alpha_{M_j}$ for the parameter $\M$ on level $j \in \{1, 2, \dots, \ell\}$, it is easy to see that $\alpha_{M_j}$ is of order $\frac{1}{\|A_j\|}$ for Richardson iteration, where $M_j = s_j I_j$: we assume that the energy norm of its error propagation matrix $G_j = I_j - s_j A_j$ is less than $1$, so $s_j$ must be positive and bounded according to
\[
s_j = \frac{\|s_j A_j\|_{A_j}}{\|A_j\|} \le \frac{\|I_j\|_{A_j} + \|I_j - s_j A_j\|_{A_j}}{\|A_j\|} <\frac{2}{\|A_j\|},
\]
from which follows $\alpha_{M_j} < \frac{2}{\|A_j\|}$. For Jacobi, where $G_j = I_j - s_j D_j^{-1}A_j$, $D_j$ is the diagonal of $A_j$, and $s_j = \frac{\omega}{\|D_j^{-1/2}A_jD_j^{-1/2}\|}$, it is straightforward to show that $\alpha_{M_j} = \bigO(\frac{\kappa(D_j)}{\|A_j\|})$. These estimates are typical of relaxation methods applied to elliptic PDEs because they can only effectively target the upper end of the spectrum of $A$ with the simple way they approximate $A^{-1}$. Considering any stronger relaxation scheme that captures the lower end of the spectrum (think of $A^{-1}$ in the extreme case) would beg the question as to how one would solve the resulting linear system that must accompany this stronger method. To exclude such a consideration, assume now that the constant $\cc$ defined in $(\ref{simpl})$ holds on all levels (e.g., $\cc \ge (1 + \ewed_j) \max \{ \alpha_{M_j} \|A_j\|, \alpha_{M_j}\||A_j|\|, \||A_j|\| \|M_j\|\}\}, \, 1 \le j \le \ell$). We also now redefine the following parameters to mean their maxima over all levels:
\begin{align*}
\kappa (P^tP) &= \max_{1 \le j \le \ell} \kappa (P_j^tP_j), \quad \aladot = \max_{1 \le j \le \ell}\frac{m_{A_j}}{1 - m_{A_j} \ewed_j},\quad \alp = \max_{1 \le j \le \ell}\frac{m_{P_j}}{1 - m_{P_j} \ewed_j},
\\
& \alabar = \max_{1 \le j \le \ell}\frac{m_{A_j}}{1 - m_{A_j} \eweb_j}, \quad \textrm{and}\quad \alpnodot = \max_{1 \le j \le \ell}\frac{m_{P_j}}{1 - m_{P_j} \ewe_j}.
\end{align*}

These assumptions are valid for a very general class of elliptic PDEs. They allow us to take the coefficients in the expressions for $\ead$ and $\bet$ in (\ref{ea}) to mean these maxima. Note then that all of the coefficients of $\pvard$ in $\ptg = \ptg(\pvard)$ are positive. The same expressions hold for the $\ead$ and $\bet$ associated with each grid level $j$, but with $\pvard$ replaced by $\pvarj = \kappa^\frac{1}{2}(A_j) \ewed_j$. Let $\ptgjb$ denote $\ptg$ in (\ref{ea}) with the implied argument $\pvard$ replaced by $\pvarj, \, 1\le j \le \ell$. Since $\ptgjb$ consists of a strictly linear term in $\pvarj$ plus higher-order terms, it is easy to see by (\ref{Theta}) and (\ref{zeta}) that 
\begin{equation}
\ptgjmb = \ptg (h^{-\emm}_{j - 1}\ewed_{j - 1}) = \ptg (\theta_j^{-\emm} {\dot\zeta}_j h^{-\emm}_j \ewed_j ) \le \cratio^{-\emm} \ptgjb, \quad 2 \le j \le \ell .
\label{deltaemm}
\end{equation}

\begin{thm}{\em \V.}
If $\pvard_j$ is small enough to ensure that $\pv(\pvard_j) =  \frac{\cratio^\emm}{\cratio^\emm - 1}\ptg(\pvard_j) < 1 - \rvstar, \, 1 \le j \le \ell$, then one cycle of the progressive-precision version of Algorithm~\ref{alg-mg} for solving the {\ir } residual equation converges according to
\begin{equation}
\|y - A^{-1}r\|_A \le \rv \|A^{-1}r\|_A, \, \rv = \rvstar + \pv, \,
\pv = \pv(\pvard_\ell) = \frac{\cratio^\emm}{\cratio^\emm - 1}\ptg(\pvard_\ell) .
\label{rh-v}
\end{equation}
\label{theorem3}
\end{thm}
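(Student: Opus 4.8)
The plan is to prove, by induction on the number of levels $\ell$, a statement somewhat stronger than \eqref{rh-v}. Write $W_j$ for the \emph{infinite-precision} V-cycle operator on level $j$: applied to a right-hand side $s$ from the zero initial guess it returns $W_j s = (I_j - V_j)A_j^{-1}s$, so that $\|W_j s - A_j^{-1}s\|_{A_j} = \|V_j A_j^{-1}s\|_{A_j} \le \rvstar \|A_j^{-1}s\|_{A_j}$ by \eqref{convv}. I claim that the finite-precision V-cycle on level $j$ returns $\tilde y_j$ with
\[
\|\tilde y_j - W_j s\|_{A_j} \le \pv(\pvard_j)\,\|A_j^{-1}s\|_{A_j}, \qquad 1 \le j \le \ell ,
\]
and then \eqref{rh-v} on level $\ell$ follows from the triangle inequality. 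Comparing $\tilde y_j$ against $W_j s$ rather than against $A_j^{-1}s$ is the crux of the argument: it keeps the recursion \eqref{vb} for $V_\ell$ intact, so the inexact coarse solve enters the level-$\ell$ error only through $P_\ell(\tilde y_{\ell-1} - W_{\ell-1}b_c)$ and the hypothesis $\|V_\ell\|_{A_\ell} \le \rvstar$ can be used verbatim. Working instead from the weaker bound $\|\tilde y_{\ell-1} - A_{\ell-1}^{-1}b_c\|_{A_{\ell-1}} \le (\rvstar + \pv(\pvard_{\ell-1}))\|\cdot\|_{A_{\ell-1}}$ would force one to split off $W_{\ell-1}b_c$ at the cost of a spurious factor of roughly $2\rvstar$.

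The base case $\ell = 1$ is a short direct estimate: the V-cycle is a rounding $s^{(0)} = s + \delta_0$ followed by one relaxation sweep $\tilde y_1 = M_1 s^{(0)} + \delta_M$, so $\tilde y_1 - W_1 s = M_1\delta_0 + \delta_M$, and \eqref{fl1}, \eqref{mest}, the bound $\|A_1^{\frac{1}{2}}M_1\| \le \|A_1\|^{\frac{1}{2}}\|M_1\|$, the defining inequality \eqref{simpl} for $\cc$, and $\ewed_1 \le \pvard_1$ bound this by $\bigO(\cc\pvard_1)\,\|A_1^{-1}s\|_{A_1}$, which sits below $\pv(\pvard_1) \ge \ptg(\pvard_1)$. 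For the inductive step, view one level-$\ell$ V-cycle as a two-grid-like cycle whose coarse solve (line 6 of Algorithm~\ref{alg-mg}) is the level-$(\ell-1)$ V-cycle. Tracking the finite-precision operations exactly as in the proofs of Lemma~\ref{lemma} and Theorem~\ref{theorem1} — the relaxation error $\delta_M$ (line 2); the residual error (line 4) and restriction error (line 5) rolled together with $A_\ell\delta_M$ into a single coarse-side perturbation $\delta_c$ with $\|A_{\ell-1}^{-1}\delta_c\|_{A_{\ell-1}}$ of order $\bet_0\ead$; the interpolation error $\delta_P$ (line 7) of order $\ead$; and the subtraction error (line 8) of order $\pvard_\ell$ — and then using the Galerkin identity \eqref{galerk} through $\|P_\ell v\|_{A_\ell} = \|v\|_{A_{\ell-1}}$, the projection bound $\|(I_\ell - T_\ell)G_\ell\|_{A_\ell} \le 1$, and the recursion \eqref{vb}, one is led to a decomposition of the form
\begin{align*}
\tilde y_\ell - W_\ell s &= -\,P_\ell\big(\tilde y_{\ell-1} - W_{\ell-1}b_c\big) - P_\ell(I_{\ell-1} - V_{\ell-1})A_{\ell-1}^{-1}\delta_c \\
&\quad + \delta_M - \delta_P + (\text{update and RHS-rounding terms of size } \bigO(\pvard_\ell)\|A_\ell^{-1}s\|_{A_\ell}) ,
\end{align*}
where $b_c = P_\ell^t A_\ell G_\ell e^{(0)} + \delta_c$ is the perturbed restricted residual and $e^{(0)} = -A_\ell^{-1}s^{(0)}$.

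Taking energy norms, the induction hypothesis gives $\|\tilde y_{\ell-1} - W_{\ell-1}b_c\|_{A_{\ell-1}} \le \pv(\pvard_{\ell-1})\,\|A_{\ell-1}^{-1}b_c\|_{A_{\ell-1}}$, and since $\|A_{\ell-1}^{-1}b_c\|_{A_{\ell-1}} \le \|e^{(0)}\|_{A_\ell} + \|A_{\ell-1}^{-1}\delta_c\|_{A_{\ell-1}}$ and $\|I_{\ell-1} - V_{\ell-1}\|_{A_{\ell-1}} \le 1 + \rvstar$, the on-level roundoff contributions collect, essentially as the linear-plus-$\hot$ expression in \eqref{ea}, into $\ptg(\pvard_\ell)$; the extra cross terms that carry a factor $\pv(\pvard_{\ell-1})$ are of the same or higher order and are absorbed using the slack that \eqref{ea} deliberately leaves in the $\ead$- and $\hot$-coefficients together with \eqref{deltaemm}. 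This yields $\|\tilde y_\ell - W_\ell s\|_{A_\ell} \le (\pv(\pvard_{\ell-1}) + \ptg(\pvard_\ell))\,\|A_\ell^{-1}s\|_{A_\ell}$, and the induction closes because \eqref{deltaemm} gives $\pv(\pvard_{\ell-1}) = \frac{\cratio^\emm}{\cratio^\emm - 1}\ptg(\pvard_{\ell-1}) \le \frac{1}{\cratio^\emm - 1}\ptg(\pvard_\ell)$, hence $\pv(\pvard_{\ell-1}) + \ptg(\pvard_\ell) \le \frac{\cratio^\emm}{\cratio^\emm - 1}\ptg(\pvard_\ell) = \pv(\pvard_\ell)$. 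The provisos $\pv(\pvard_j) < 1 - \rvstar$ keep every $\rvstar + \pv(\pvard_j)$ below $1$, so each intermediate cycle genuinely contracts and the two-grid-style estimates stay in force.

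The main obstacle is assembling the displayed decomposition so that the inexact coarse solve appears \emph{only} through $P_\ell(\tilde y_{\ell-1} - W_{\ell-1}b_c)$ — this is what lets $\|V_\ell\|_{A_\ell} \le \rvstar$ be invoked without losing a constant and is precisely why the induction must carry the comparison against $W_{\ell-1}$ instead of the bare bound \eqref{rh-v}. The remaining work is re-running the Lemma~\ref{lemma}/Theorem~\ref{theorem1} roundoff bookkeeping one level at a time and checking that the cross terms land inside $\ptg$; this is routine but, as in the two-grid proof, demands care with loose upper bounds so that the recursion closes exactly on $\pv(\pvard_\ell)$ rather than on something larger.
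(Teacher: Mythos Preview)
Your proposal is correct and follows essentially the same route as the paper. The paper encodes the separation of the exact V-cycle from accumulated roundoff via perturbation matrices $E_j$ (writing the computed error propagation as $V_j + E_j$), invokes Theorem~\ref{theorem1} as a black box with $B_c = I_{j-1} - V_{j-1}$ to get the recursion $\|E_j\|_{A_j} \le \|E_{j-1}\|_{A_{j-1}} + \ptgjb$, and then sums the geometric series; your vector-based comparison against $W_j s$ is the same idea in different notation and closes on the identical bound $\pv(\pvard_{\ell-1}) + \ptg(\pvard_\ell) \le \pv(\pvard_\ell)$.
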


\begin{proof}

Note from (\ref{vb}) and the definition of $T_j$ that
\[
V_j = (I_j - P_j (I_{j - 1} -V_{j - 1}) A_{j - 1}^{-1}P_j^t)G_j,
\]
which is the error propagation matrix for {\tg } on the ``finest'' grid $j$ with $B_{j - 1} = I_{j - 1} - V_{j - 1}$. Thus, step 6 of {\V } is step 5 of {\tg } with this choice of $B_c = I_{j - 1} - V_{j - 1}$ and {\V } is {\tg } with the coarse level solved recursively. Note that we satisfy our requirement on $B_c$ here because $\|B_{j - 1} - I_{j - 1}\|_{A_{j - 1}} = \|V_{j - 1}\|_{A_{j - 1}}  \le \rvstar < 1$. Note also that $\rvstar$ serves as the {\tg }  convergence bound for $\|V_j\|$, so we can take $\rtgstar = \rvstar$.

We could now use (\ref{convv}) and Theorem~\ref{theorem2} to account for the effects of $\ewed_\ell$-precision on the finest grid, but we also need to assess the effects of the rounding error accumulated from all of the coarse levels. This we do by recursive use of bound (\ref{conv}), exploiting its form as a perturbation from the infinite-precision bound. A key point that enables this recursion is that $\ptg(\pvard_j)$ does not depend on $\rtgstar$ if $\rtg(\pvard_j) < 1$, which holds because of our assumption on $\pvard_j$ and because $\rtgstar = \rvstar$. Another key point is that we keep these perturbations separate on all levels by writing the error propagation matrices in perturbation form. More precisely, for each $j \in \{1, 2, \dots , \ell \}$, let the error propagation matrix for the result of the computed V-cycle be denoted by $V_j + E_j$, where $V_j$ remains as the exact error propagation operator and $E_j$ incorporates the rounding errors it accumulates. To assess the impact that $E_j$ has on $ \rvstar$ in (\ref{convv}), from (\ref{vb}) and the fact that $P_jA_{j - 1}^{-1}P_j^tA_j$ is an energy-orthogonal projection onto the range of $P_j$, we have that the exact level $j$ V-cycle error propagation matrix based on the computed coarse-grid V-cycle has the new bound
\begin{align}
\|V_j\|_{A_j} &= \|(P_j (V_{j - 1} + E_{j - 1}) A_{j - 1}^{-1}P_j^tA_j + T_j) G_j\|_{A_j} \nonumber\\
&\le  \|(P_j V_{j - 1} A_{j - 1}^{-1}P_j^tA_j + T_j) G_j\|_{A_j} + \|P_j E_{j - 1} A_{j - 1}^{-1}P_j^tA_j\|_{A_j} \|G_j\|_{A_j} \nonumber\\
&\le   \rvstar + \|E_{j - 1} A_{j - 1}^{-1}P_j^tA_j\|_{A_{j - 1}} \nonumber\\
&\le   \rvstar + \|E_{j - 1}\|_{A_{j - 1}} \|P_jA_{j - 1}^{-1}P_j^tA_j\|_{A_j} \nonumber\\
&\le  \rvstar + \|E_{j - 1}\|_{A_{j - 1}}.
\label{recur}
\end{align}

We first need to estimate $\|E_j\|_{A_j}$ for the coarsest level ($j = 1$), which only uses relaxation to approximate the solution of $A_1y_1 = r_1$. To this end, using iteration superscripts as in the previous two proofs, note by the definition of $\M$ that the final error on the coarsest grid computed in $\ewed_1$-precision satisfies
\[
e_1^{(1)} = y_1^{(\frac{1}{2})} - A_1^{-1}r_1^{(0)} = \m_1 r_1^{(0)}  - A_1^{-1}r_1^{(0)} + \delta, \quad |\delta| \le \alpha_{M_1} \ewed_1 |r_1^{(0)}| .
\]
Remembering that $\pvarj = \kappa^\frac{1}{2}(A_j) \ewed_j$, we thus have that
\[
\|e_1^{(1)}\|_{A_1} \le \|\m_1 r_1^{(0)}  - A_1^{-1}r_1^{(0)}\|_{A_1} + \|\delta\|_{A_1} \le ( \rvstar  + \alpha_{M_1} \pvarone)\|e_1^{(0)}\|_{A_1} ,
\]
from which follows the bound
\begin{equation}
\|E_1\|_{A_1} \le \alpha_{M_1} \pvarone .
\label{E1}
\end{equation}

Bound (\ref{recur}) implies that the basic convergence factor that we should be using in (\ref{conv}) is not just $ \rvstar$, but rather $ \rvstar + \|E_{j - 1}\|_{A_{j - 1}}$, so we can now use Theorem~\ref{theorem1} to assert that $\rh_v =  \rvstar + \|E_{j - 1}\|_{A_{j - 1}} + \ptg$ bounds the level $j$ V-cycle convergence factor. Restating this conclusion in terms of the rounding-error matrices yields
\begin{equation}
\|E_j\|_{A_j} \le \|E_{j - 1}\|_{A_{j - 1}} + \ptgjb.
\label{recurs}
\end{equation}
This inequality provides the recursion we need. Noting that (\ref{E1}) and (\ref{ea}) lead to the loose bound $\|E_1\|_{A_1} \le \ptgoneb$, then we can start with $j= \ell$ and recurse (\ref{recurs}) back down through the coarse grids, using (\ref{deltaemm}) to conclude that
\[
\|E_\ell\|_{A_\ell} \le \|E_{\ell - 1}\|_{A_{\ell - 1}} + \ptg \le \sum\limits_{j=1}^\ell \ptgjb \le \ptg \sum\limits_{j=1}^\ell \cratio^{m(\ell - j)}
\le \frac{\cratio^\emm}{\cratio^\emm - 1} \ptg ,
\]
thus proving the theorem.
\end{proof}

\section{Full multigrid algorithm}
\label{sec:fmg}

While stopping criteria for general iterative solvers for linear equations is not entirely settled and is often application dependent, matters clarify for discretized PDEs. If  $A_j x_j = b_j$ represent increasingly accurate approximations to a given PDE, then the error in the numerical solution of (\ref{axb}) incorporates errors from algebraic and discretization processes (not to mention other sources). Investing effort to compute an approximation with an algebraic error that is much smaller than the discretization error is usually not productive. Full multigrid (see \cite{tutorial}) is based on this premise. Properly designed and applied, full multigrid is a direct method in the sense that it targets the PDE: a single iteration through its outer loop guarantees a solution with algebraic accuracy comparable to that of the discretization. It can be the most efficient solver for this purpose, with optimal complexity proportional to the size of the finest grid. The key to its success is to capitalize on a consistent form of discretization accuracy in the multigrid hierarchy and to aim at delivering an approximation that is comparable to that accuracy on every level. The basic full multigrid approach is to start on the coarsest grid and proceed to the finest, making sure that each grid along the way is solved by V-cycles with accuracy commensurate to the discretization. In essence, if grid $j - 1$ is solved to within an error of $Ch_{j - 1}^q$ for some positive constants $C$ and $q $, then using that result as an initial guess on grid $j$ means that the initial error on grid $j$ is bounded by some small multiple (depending on $\theta_j$) of $Ch_j^q$. This in turn means that only a small number of V-cycles are needed to obtain discretization accuracy on grid $j$ (i.e., error below $Ch_j^q$).

The full multigrid algorithm based on $\I \ge 1$ inner {\ir } cycles using one {\V } each is given by the pseudocode described {\fmg} below. Note that {\fmg}  amounts to three nested loops: outer {\fmg}, middle {\ir}, and inner \V. The choice of $\I$ here is critical to guarantee convergence to within discretization accuracy on each level. The goal of the next section is to determine $\I$ in the presence of rounding errors. 

\noindent
\begin{center}
\begin{minipage}{0.97\linewidth}
\begin{algorithm}[H]
\footnotesize
\caption{$\texttt{FMG$(1,0)$-Cycle (\fmg)}$}
\label{alg-fmg}
\begin{algorithmic}[1]
\Require A, b, P, $\I \ge 1$ {\ir } cycles (using one V$(1,0)$ each), $\ell \ge 1$ {\fmg } levels.
\State $x \leftarrow 0$\COMMENT{Initialize {\fmg } }
\If {$\ell > 1$}  \COMMENT{Check for Coarser Grid}
\State \textcolor{med}{$x_{\ell - 1} \leftarrow $\fmg$(A_{\ell - 1}, b_{\ell - 1}, P_{\ell - 1}, \ell - 1, \I)$}\COMMENT{\textcolor{med}{Compute Coarse-Grid Approximation}}
\State \textcolor{med}{$x \leftarrow P x_{\ell - 1}$}\COMMENT{\textcolor{med}{Interpolate Approximation to Fine Grid}}
\EndIf
\State $i \leftarrow 0$\COMMENT{Initialize {\ir } }
\While {$i < \I$}
\State \textcolor{med}{$r \leftarrow$}\textcolor{red}{$ Ax - b$} \COMMENT{\textcolor{red}{Update {\ir } Residual} and \textcolor{med}{Round}}
\State \textcolor{lo}{$y \leftarrow $\V$(A, r, P, \ell)$}\COMMENT{\textcolor{lo}{Compute Correction by \V}}
\State $i \leftarrow i + 1$\COMMENT{Increment {\ir } Cycle Counter}
\State \textcolor{med}{$x \leftarrow x - y$}\COMMENT{\textcolor{med}{Update Approximate Solution of $Ax = b$}}
\EndWhile
\State\Return $x$\COMMENT{Return Approximate Solution of $Ax = b$}
\end{algorithmic}
\end{algorithm}
\end{minipage}
\end{center}
\vspace{1em}

\section{Convergence Theory: Full Multigrid}
\label{sec:analysis-fmg}

While the theory developed here has elliptic PDEs in mind, the intent is to provide a more general algebraic theory that is removed from specific applications. To obtain such an abstract theory for {\fmg } applied to (\ref{axb}), we need to extract a sense of discretization accuracy directly from the matrix hierarchy itself. To do so, we need to establish a relationship between the exact solutions $A_j^{-1}b_j$ on all levels $j \in \{1, 2, \dots, \ell\},$ as we do in (\ref{abc}) below. Establishing how well grid $j - 1$ solutions approximate grid $j$ solutions then leads us to a sense of discretization accuracy on the finest level, as expressed in (\ref{disc}) below.

To this end and commensurate with our assumption on the exactness of the Galerkin condition (\ref{galerk}), assume now that $b_{j - 1} = P_j^tb_j, \, 2 \le j \le \ell$, are also computed exactly. Since $P_jA_{j - 1}^{-1}P_j^t A_j$ is an energy-orthogonal projection, we have that
\begin{align}
\|A_{j - 1}^{-1}b_{j - 1}\|_{A_{j - 1}} &= \langle A_{j - 1}^{-1}P_j^tb_j, P_j^tb_j\rangle^\frac{1}{2} \nonumber \\
&= \langle (P_jA_{j - 1}^{-1}P_j^t A_j) A_j^{-1} b_j, A_j A_j^{-1} b_j\rangle^\frac{1}{2} \nonumber \\
&= \|(P_jA_{j - 1}^{-1}P_j^t A_j) A_j^{-1} b_j\|_{A_j} \nonumber \\
&\le \|A_j^{-1} b_j\|_{A_j} .
\label{abc}
\end{align}
We can thus characterize the relative accuracy of adjacent levels in the grid hierarchy by assuming that $C$ and $q$ are positive constants such that
\begin{equation}
\|P_j A_{j - 1}^{-1}b_{j - 1} - A_j^{-1}b_j\|_{A_j} \le C h_{j - 1}^q \|A_j^{-1}b_j\|_{A_j}, \quad 2 \le j \le \ell .
\label{C}
\end{equation}
For a fully regular PDE of order $2\emm$, sufficient smoothness of the solution $u$, and finite element functions of a fixed order $k$ (e.g., polynomials of fixed degree $p=k - 1$), it can be shown (see \cite[Sec.~2.2]{Strang2008}) that
\begin{equation}
\|u - u_h\|_s = \mathcal{O}\left(h^{k-s}+ h^{2(k-m)}\right) ,
\label{strangfixconvergence}
\end{equation}
where $u_h$ denotes the discrete solution and $s$ is the highest-order derivative in the Sobolev or Sobolev-equivalent norm $\| \cdot \|_s$. For the energy norm, $s=\emm$, so we assume that $k > \emm$ to ensure convergence. The error is therefore dominated by the first term such that the relative accuracy of the discrete approximation on grid $j$ is $h_j^q$, where $q = k - \emm$. It is easy to show by the triangle inequality that this confirms that (\ref{C}) holds in this case. This bound can also be established from existing matrix theory without reference to a PDE. Specifically, remembering the definitions of $h_{j - 1}$ and $\theta_j$, then (\ref{C}) holds with $C = \theta_j^{-\emm} C_1C_2$ and $q = \emm + 1$ directly from the so-called {\em strong approximation property} (cf. \cite{Vassilevski2008}) given by
\begin{equation}
\|P_j A_{j - 1}^{-1}b_{j - 1} - A_j^{-1}b_j\|_{A_j} \le \frac{C_1}{\|A_j^\frac{1}{2}\|} \|A_j(A_j^{-1}b_j)\| ,
\label{sap}
\end{equation}
which is assumed to hold for any RHS, and the {\em algebraic smoothness property} of our specific RHS given by
\begin{equation}
\|b_j\| = \|A_j(A_j^{-1}b_j)\| \le \frac{C_2}{\|A_j^{-\frac{1}{2}}\|} \|A_j^{-1}b_j\|_{A_j}.
\label{algsmooth}
\end{equation}
The strong approximation property says loosely that $A_j^{-1}b_j$ can be accurately approximated when its expansion in terms of the eigenvectors of $A_j$ are predominantly in the lower spectrum of $A_j$, which is what algebraic smoothness asserts. 

While (\ref{C}) characterizes the relative error in a coarse-grid solution with respect to the next finer grid, it also suggests the following definition. We say that $x_j$ solves $A_jx_j =b_j$ {\em to the order of discretization error} or simply {\em to discretization accuracy} if
\begin{equation}
\|x_j - A_j^{-1}b_j\|_{A_j} \le C h_j^q  \|A_j^{-1}b_j\|_{A_j} , \quad 1 \le j \le \ell.
\label{disc}
\end{equation}
Assume that (\ref{disc}) holds on the coarsest level as a result of one {\fmg } cycle there, that is, when $x_1$ is the result of $\I$ cycles of {\ir}-{\V } on level $j=1$ starting with a zero initial guess. (Note that {\V } consists of just one relaxation sweep.) This assumption holds (similar to (\ref{convv}) with $j=1$) in the common case that $A_1$ is well-conditioned enough that relaxation converges quickly and rounding effects are negligible. 
\begin{thm}{\em \fmg.}
Assume that $\rh_v + \pir < 1$ and that $\thresh$ is small enough and $\I$ is large enough that the following  holds on all levels $j \in \{1, 2,\dots, \ell\}$:
\begin{equation}
(\rh_v + \pir)^\I \left((\sqrt{2} + \ea) \theta^q C h^q +  \ea\right) + \frac{\thresh}{1 -(\rh_v + \pir)} \le C h^q ,
\label{mess}
\end{equation}
where $h = h_j$, $\theta = \theta_j$, and $\ea = \ea_{j} =  \kappa^{\frac{1}{2}} (P_j^tP_j) \alpnodot \pvar_j$, and (with subscript $j$ understood) the parameters $\rh_v$, $\pir$, and $\thresh$ are given by (\ref{rh-v}), (\ref{ec}),  and (\ref{chi}), respectively. Then Algorithm~\ref{alg-fmg} solves (\ref{axb}) to the order of discretization error on each level. 
\label{theorem4}
\end{thm}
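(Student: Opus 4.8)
The plan is to argue by induction on the level index $j$, proving that one call of Algorithm~\ref{alg-fmg} returns, on every level $j \in \{1,\dots,\ell\}$, an approximation $x_j$ to $A_j^{-1}b_j$ satisfying the discretization-accuracy estimate (\ref{disc}) with the fixed constants $C$ and $q$ of (\ref{C}). The base case $j = 1$ is exactly the hypothesis stated just before the theorem: $\I$ cycles of \ir-\V\ on the coarsest grid, starting from a zero initial guess, already yield (\ref{disc}) for $j = 1$. (When $\ell = 1$ this is the entire claim; otherwise the recursive structure of \fmg\ reduces level $j$ to the level-$(j-1)$ result returned by step~3.)

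For the inductive step, fix $j$ with $2 \le j \le \ell$ and assume $\|x_{j-1} - A_{j-1}^{-1}b_{j-1}\|_{A_{j-1}} \le C h_{j-1}^q \|A_{j-1}^{-1}b_{j-1}\|_{A_{j-1}}$. The crux is to bound the error of the \ir\ initial guess $x^{(0)}$ on level $j$, which is $P_j x_{j-1}$ computed in $\ewe_j$-precision in step~4. Writing $x^{(0)} = P_j x_{j-1} + \delta$, I would first treat the exact part $P_j x_{j-1} - A_j^{-1}b_j$. Using $b_{j-1} = P_j^t b_j$ and the Galerkin condition (\ref{galerk}), $P_j A_{j-1}^{-1}b_{j-1} = P_j A_{j-1}^{-1}P_j^t A_j(A_j^{-1}b_j)$ is the energy-orthogonal projection of $A_j^{-1}b_j$ onto the range of $P_j$, so
\[
P_j x_{j-1} - A_j^{-1}b_j = \underbrace{P_j\bigl(x_{j-1} - A_{j-1}^{-1}b_{j-1}\bigr)}_{\in\,\mathrm{range}(P_j)} \;-\; \underbrace{T_j\bigl(A_j^{-1}b_j\bigr)}_{\in\,\mathrm{range}(T_j)} .
\]
Since $\mathrm{range}(P_j)$ and $\mathrm{range}(T_j)$ are energy-orthogonal, Pythagoras applies. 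The first summand has energy norm $\|x_{j-1} - A_{j-1}^{-1}b_{j-1}\|_{A_{j-1}}$ (because $\|P_j z\|_{A_j} = \|z\|_{A_{j-1}}$ by Galerkin), which the inductive hypothesis and the monotonicity (\ref{abc}) bound by $C h_{j-1}^q \|A_j^{-1}b_j\|_{A_j}$; the second is $\|P_j A_{j-1}^{-1}b_{j-1} - A_j^{-1}b_j\|_{A_j} \le C h_{j-1}^q \|A_j^{-1}b_j\|_{A_j}$ by (\ref{C}). Hence $\|P_j x_{j-1} - A_j^{-1}b_j\|_{A_j} \le \sqrt{2}\,C h_{j-1}^q\|A_j^{-1}b_j\|_{A_j}$; the $\sqrt{2}$ (versus $2$) is precisely the gain from energy-orthogonality, which is why it appears in (\ref{mess}).

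Next I would bound the interpolation rounding error $\delta$. The matrix-vector estimate (\ref{fl5a}) in $\ewe_j$-precision (with at most $m_{P_j}$ nonzeros per row of $P_j$ and $P_j \ge 0$) gives $\|\delta\| \le \alpnodot \ewe_j \|P_j^tP_j\|^{1/2}\|x_{j-1}\|$, just as in the derivation of (\ref{delta}). Bounding $\|x_{j-1}\| \le \|A_{j-1}^{-1/2}\|(1 + C h_{j-1}^q)\|A_{j-1}^{-1}b_{j-1}\|_{A_{j-1}}$ from the inductive hypothesis, passing to the energy norm via $\|\delta\|_{A_j}\le\|A_j^{1/2}\|\,\|\delta\|$, using (\ref{abc}), and invoking $\|A_{j-1}^{-1/2}\| = \|(P_j^tA_jP_j)^{-1/2}\| \le \|A_j^{-1/2}\|\,\|(P_j^tP_j)^{-1/2}\|$ as established inside the proof of Lemma~\ref{lemma}, the constants collapse to $\kappa^{1/2}(A_j)\kappa^{1/2}(P_j^tP_j)$, so $\|\delta\|_{A_j} \le \ea_j(1 + C h_{j-1}^q)\|A_j^{-1}b_j\|_{A_j}$. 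Combining with the previous estimate and using $\theta_j^q h_j^q = h_{j-1}^q$ yields the initial relative error bound $(\sqrt{2} + \ea_j)\theta_j^q C h_j^q + \ea_j$. Now I would apply Theorem~\ref{theorem2} to $A_j x_j = b_j$, the inner solver being one \V\ cycle so that (\ref{rh}) holds with $\rinner = \rh_v$ by Theorem~\ref{theorem3} and $\rho_{ir} = \rh_v + \pir < 1$ by assumption; estimate (\ref{Kbnd}) after $\I$ \ir\ cycles then reads
\[
\frac{\|x_j - A_j^{-1}b_j\|_{A_j}}{\|A_j^{-1}b_j\|_{A_j}} \le (\rh_v + \pir)^\I\Bigl((\sqrt{2} + \ea_j)\theta_j^q C h_j^q + \ea_j\Bigr) + \frac{\thresh}{1 - (\rh_v + \pir)} \le C h_j^q ,
\]
the last inequality being hypothesis (\ref{mess}) on level $j$. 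This is (\ref{disc}) on level $j$ and closes the induction.

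The main obstacle is the level-$j$ initial-error estimate, and within it the step that pushes the $\ewe_j$-precision interpolation error into the energy norm in the clean $\ea_j$ form: that relies on the $\|(P_j^tP_j)^{-1/2}\|$ inequality borrowed from Lemma~\ref{lemma} and on the Galerkin identity to translate between the level-$j$ and level-$(j-1)$ energy norms, with the rest being bookkeeping of the small factors $C h_{j-1}^q$. The Pythagorean split producing the $\sqrt{2}$ is the key structural idea but is technically light once energy-orthogonality of $\mathrm{range}(P_j)$ and $\mathrm{range}(T_j)$ is invoked. Everything downstream of the initial-error bound is a direct appeal to Theorems~\ref{theorem2} and~\ref{theorem3} together with the structural hypothesis (\ref{mess}).
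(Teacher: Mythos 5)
Your proof is correct and follows essentially the same route as the paper: induction on $j$, a Pythagorean split of the initial error into its range-$P_j$ and range-$T_j$ components (the paper verifies the orthogonality by direct computation, you cite the energy-orthogonality of the projections — same fact), the interpolation rounding-error bound in $\ewe_j$-precision reusing the machinery of Lemma~\ref{lemma} and (\ref{perr}), and a final appeal to Theorems~\ref{theorem2} and~\ref{theorem3} combined with hypothesis (\ref{mess}).
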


\begin{proof}
We proceed by induction. Knowing that (\ref{disc}) holds for the coarsest level $j = 1$ by assumption, suppose that $j > 1$ and that (\ref{disc}) holds on level $j - 1$, where $x_{j - 1}$ is now the result of {\fmg } applied on that level. We need to account for the errors in transferring  $x_{j - 1}$ to the fine grid, but we first follow the effects of exact computation. To this end, note that
\begin{align*}
\langle P_j x_{j - 1} - P_j A_{j - 1}^{-1}b_{j - 1}, &A_j(P_j A_{j - 1}^{-1}b_{j - 1} - A_j^{-1}b_j)\rangle \\
&= \langle x_{j - 1} - A_{j - 1}^{-1}b_{j - 1}, P_j^tA_j P_j A_{j - 1}^{-1}b_{j - 1} - P_j^tA_j A_j^{-1}b_j\rangle_{A_j} \\
&= \langle x_{j - 1} - A_{j - 1}^{-1}b_{j - 1}, b_{j - 1} - P_j^tb_j\rangle = 0 .
\end{align*}
This orthogonality, the induction hypothesis, and (\ref{abc}) allow us to conclude that
 \begin{align}
 \|P_j x_{j - 1} - A_j^{-1}b_j\|_{A_j} &= \|P_j x_{j - 1} - P_j A_{j - 1}^{-1}b_{j - 1} + P_j A_{j - 1}^{-1}b_{j - 1} - A_j^{-1}b_j\|_{A_j} \nonumber \\
&= (\|P_j x_{j - 1} - P_j A_{j - 1}^{-1}b_{j - 1}\|_{A_j}^2 + \|P_j A_{j - 1}^{-1}b_{j - 1} - A_j^{-1}b_j\|_{A_j}^2)^\frac{1}{2} \nonumber \\
 &\le C h_{j - 1}^q (\|A_{j - 1}^{-1}b_{j - 1}\|_{A_{j - 1}}^2 + \|A_j^{-1}b_j\|_{A_j}^2)^\frac{1}{2} \nonumber \\
 &\le \sqrt{2} C h_{j - 1}^q \|A_j^{-1}b_j\|_{A_j} .
 \label{init1}
 \end{align}

We can account for rounding error in $P_j x_{j - 1}$ with logic similar to that leading up to (\ref{perr}), but now in $\ewe$-precision: we can write its computed value as $P_j x_{j - 1} + \delta_j,  \, \|\delta_j\|_{A_j} \le \ea_j \|x_{j - 1}\|_{A_{j - 1}}$. The induction hypothesis and (\ref{abc}) again yields
\begin{align*}
\|x_{j - 1}\|_{A_{j - 1}} &\le \|A_{j - 1}^{-1}b_{j - 1}\|_{A_{j - 1}} + \|x_{j - 1} - A_{j - 1}^{-1}b_{j - 1}\|_{A_{j - 1}} \\
&\le (1 + C h_{j - 1}^q) \|A_{j - 1}^{-1}b_{j - 1}\|_{A_{j - 1}} \\
&\le (1 + C h_{j - 1}^q) \|A_j^{-1}b_j\|_{A_j} ,
\end{align*}
which, with (\ref{init1}) and (\ref{Theta}), leads to
 \begin{align*}
 \|P_j x_{j - 1} + \delta_j - A_j^{-1}b_j\|_{A_j} &\le \sqrt{2} C h_{j - 1}^q \|A_j^{-1}b_j\|_{A_j} + \|\delta_j\|_{A_j} \\
 &\le (\sqrt{2} C h_{j - 1}^q + (1 + C h_{j - 1}^q) \ea_{j})\|A_j^{-1}b_j\|_{A_j} \\
 &\le  \left((\sqrt{2} +  \ea_{j}) \theta_j^q C h_j^q + \ea_{j} \right)\|A_j^{-1}b_j\|_{A_j} .
 \end{align*}
Applying $\I$ cycles of {\ir } to the initial approximation $P_j x_{j - 1} + \delta_j $ thus yields
\[
\|x_j - A_j^{-1}b_j\|_{A_j} \le c_j  \|A_j^{-1}b_j\|_{A_j} ,
\]
where appealing to Theorems~\ref{theorem2} and \ref{theorem3} allows us to take
\[
c_j = (\rh_v + \pir)^\I  \left((\sqrt{2} +  \ea_{j}) \theta_j^q C h_j^q + \ea_{j} \right) + \frac{\thresh}{1 -(\rh_v + \pir)} ,
\]
thus proving the theorem.
\end{proof}
\begin{rem}{\em Understanding the Error Bounds.}
While the proof of Theorem~\ref{theorem4} is fairly simple, its statement nevertheless involves the many parameters that are needed to account for errors in all three of the nested solver loops. We can, however, parse condition (\ref{mess}) somewhat by writing it loosely as follows, assuming that $\ewed \ge \ewe \ge \eweb$ and remembering that $\pvard = \kappa^{\frac{1}{2}} (A) \ewed$ and $\pvar = \kappa^{\frac{1}{2}} (A) \ewe$:
\begin{equation}
(\rvstar + \bigO(\pvard))^\I \left(\sqrt{2} \theta^q C h^q + \bigO(\pvar)\right) +  \bigO(\pvar) \le Ch^q  ,
\label{mess1}
\end{equation}
where the constants implicit in the order symbols $\bigO(\cdot)$ are assumed to be of modest size and where we replaced $\thresh$ by $\bigO(\pvar)$. (We require $\thresh$ to be of order $\pvar$ to ensure convergence of {\ir}.) We must have $\bigO(\pvard) < 1 - \rvstar$ to satisfy Theorem~\ref{theorem4}. We also require $\pvar$ to be bounded by a small constant times $h^q$. (Otherwise, (\ref{mess1}) could not be obtained no matter how many {\ir } iterations are taken.)  This assumption means that the term $\sqrt{2} \theta^q C h^q + \bigO(\pvar)$ in (\ref{mess1}) is bounded by a modest multiple of $Ch^q$, which in turn means that $(\rvstar + \bigO(\pvard))^\I$ only need be small enough to reduce that multiple to below $Ch^q$ itself. On the other hand, if $\bigO(\pvard)$ and $\bigO(\pvar)$ are small enough to be negligible in (\ref{mess1}), then this condition is satisfied by $\I$ being only so large that $(\rvstar)^\I  \sqrt{2} \theta^q < 1$, that is, 
$
\I > \frac{0.5 + q\log_2(\theta)}{| \log_2(\rvstar)|}.
$
This requirement is independent of $h$, which ensures that {\fmg } converges optimally in the sense that it obtains accuracy to the level of the discretization in a uniformly bounded number of floating point operations per fine-grid unknown.
\label{fmg-conv}
\end{rem}

\section{Conclusions}
\label{sec:conclusion}

This paper has established an abstract theory for analyzing the effects of rounding error on two-grid, V-cycle, and FMG solvers applied to sparse SPD matrices. We have not accounted for the effects of inexact system matrices, RHS's, and interpolation operators, but these issues are treated in the related paper \cite{Benzaken2020} along with numerical results for a model biharmonic equation. To avoid further complexity, we have also not assessed the effects of overflow or underflow, post smoothing, or non-polynomial smoothers such as Gauss-Seidel. Nevertheless, the abstract algebraic framework we have developed applies to a large class of applications, including many elliptic PDEs and inherently discrete matrix equations. We have shown that, in the energy norm, the computed approximation is affected more by convergence stalling at limiting accuracy rather than by convergence factors degrading. In fact, the precision used in iterative refinement's inner solve has little effect on the resulting accuracy compared to that of the limiting accuracy determined by the higher precision. The results also show that V-cycles and FMG are capable of leveraging progressive precision by using increasingly lower precision on levels that are increasingly coarser, and thus decreasingly accurate. In this way, progressive precision allows FMG to obtain discretization accuracy using a minimal amount of resources.

\bibliographystyle{siamplain}
\bibliography{mpmg}

\begin{thebibliography}{10}

\bibitem{Chatelin1983}
{\sc M.~Ahu{\'e}s and F.~Chatelin}, {\em {The Use of Defect Correction to
  Refine the Eigenelements of Compact Integral Operators}}, SIAM Journal on
  Numerical Analysis, 20 (1983), pp.~1087--1093,
  \url{https://doi.org/10.1137/0720077}.

\bibitem{Arioli1989}
{\sc M.~Arioli, J.~Demmel, and I.~Duff}, {\em {Solving Sparse Linear Systems
  with Sparse Backward Error}}, SIAM Journal on Matrix Analysis and
  Applications, 10 (1989), pp.~165--190, \url{https://doi.org/10.1137/0610013}.

\bibitem{Baboulin2009}
{\sc M.~Baboulin, A.~Buttari, J.~Dongarra, J.~Kurzak, J.~Langou, J.~Langou,
  P.~Luszczek, and S.~Tomov}, {\em {Accelerating scientific computations with
  mixed precision algorithms}}, Computer Physics Communications, 180 (2009),
  pp.~2526--2533, \url{https://doi.org/10.1016/j.cpc.2008.11.005}.
\newblock 40 YEARS OF CPC: A celebratory issue focused on quality software for
  high performance, grid and novel computing architectures.

\bibitem{BrandtLivne2001}
{\sc A.~Brandt and O.~Livne}, {\em {Multigrid Techniques. 1984 Guide with
  Applications to Fluid Dynamics, Revised Ed.}}, SIAM, Philadelphia, 2001,
  \url{https://doi.org/10.1137/1.9781611970753}.

\bibitem{tutorial}
{\sc W.~L. Briggs, V.~E. Henson, and S.~F. McCormick}, {\em {A Multigrid
  Tutorial}}, SIAM Books, Philadelphia, 2000,
  \url{https://doi.org/10.1137/1.9780898719505}.
\newblock Second edition.

\bibitem{Buttari2008}
{\sc A.~Buttari, J.~Dongarra, J.~Kurzak, P.~Luszczek, and S.~Tomov}, {\em
  {Using Mixed Precision for Sparse Matrix Computations to Enhance the
  Performance While Achieving 64-bit Accuracy}}, ACM Trans. Math. Softw., 34
  (2008), pp.~17:1--17:22, \url{https://doi.org/10.1145/1377596.1377597}.

\bibitem{Buttari2007}
{\sc A.~Buttari, J.~Dongarra, J.~Langou, J.~Langou, P.~Luszczek, and
  J.~Kurzak}, {\em {Mixed Precision Iterative Refinement Techniques for the
  Solution of Dense Linear Systems}}, The International Journal of High
  Performance Computing Applications, 21 (2007), pp.~457--466,
  \url{https://doi.org/10.1177/1094342007084026}.

\bibitem{Carson2017}
{\sc E.~Carson and N.~J. Higham}, {\em {A New Analysis of Iterative Refinement
  and Its Application to Accurate Solution of Ill-Conditioned Sparse Linear
  Systems}}, SIAM Journal on Scientific Computing, 39 (2017), pp.~A2834--A2856,
  \url{https://doi.org/10.1137/17M1122918}.

\bibitem{Carson2018}
{\sc E.~Carson and N.~J. Higham}, {\em {Accelerating the Solution of Linear
  Systems by Iterative Refinement in Three Precisions}}, SIAM Journal on
  Scientific Computing, 40 (2018), pp.~A817--A847,
  \url{https://doi.org/10.1137/17M1140819}.

\bibitem{Clark2010}
{\sc M.~Clark, R.~Babich, K.~Barros, R.~Brower, and C.~Rebbi}, {\em {Solving
  lattice QCD systems of equations using mixed precision solvers on GPUs}},
  Computer Physics Communications, 181 (2010), pp.~1517--1528,
  \url{https://doi.org/10.1016/j.cpc.2010.05.002}.

\bibitem{Demmel2006}
{\sc J.~Demmel, Y.~Hida, W.~Kahan, X.~S. Li, S.~Mukherjee, and E.~J. Riedy},
  {\em {Error Bounds from Extra-precise Iterative Refinement}}, ACM Trans.
  Math. Softw., 32 (2006), pp.~325--351,
  \url{https://doi.org/10.1145/1141885.1141894}.

\bibitem{Demmel1997}
{\sc J.~W. Demmel}, {\em {Applied Numerical Linear Algebra}}, Society for
  Industrial and Applied Mathematics, 1997,
  \url{https://doi.org/10.1137/1.9781611971446}.

\bibitem{Galal2012}
{\sc S.~Galal}, {\em {Energy efficient floating-point unit design}}, PhD
  thesis, Stanford University, Nov 2012,
  \url{http://purl.stanford.edu/tf297yq9849}.

\bibitem{Goeddeke2010}
{\sc D.~G{\"o}ddeke}, {\em {Fast and Accurate Finite-Element Multigrid Solvers
  for PDE Simulations on GPU Clusters}}, PhD thesis, Technischen
  Universit{\"a}t Dortmund, May 2010, \url{http://hdl.handle.net/2003/27243}.

\bibitem{Haidar2018}
{\sc A.~Haidar, S.~Tomov, J.~Dongarra, and N.~J. Higham}, {\em {Harnessing GPU
  Tensor Cores for Fast FP16 Arithmetic to Speed Up Mixed-precision Iterative
  Refinement Solvers}}, in {Proceedings of the International Conference for
  High Performance Computing, Networking, Storage, and Analysis}, SC '18,
  Piscataway, NJ, USA, 2018, IEEE Press, pp.~47:1--47:11,
  \url{https://doi.org/10.1109/SC.2018.00050}.

\bibitem{Haidar2017}
{\sc A.~Haidar, P.~Wu, S.~Tomov, and J.~Dongarra}, {\em {Investigating Half
  Precision Arithmetic to Accelerate Dense Linear System Solvers}}, in
  {Proceedings of the 8th Workshop on Latest Advances in Scalable Algorithms
  for Large-Scale Systems}, ScalA '17, New York, NY, USA, 2017, ACM,
  pp.~10:1--10:8, \url{https://doi.org/10.1145/3148226.3148237}.

\bibitem{Higham2002}
{\sc N.~J. Higham}, {\em {Accuracy and Stability of Numerical Algorithms, 2nd.
  Ed.}}, SIAM, Philadelphia, 2002, \url{https://doi.org/10.1137/17M1122918}.

\bibitem{Langou2006}
{\sc J.~Langou, J.~Langou, P.~Luszczek, J.~Kurzak, A.~Buttari, and
  J.~Dongarra}, {\em {Exploiting the Performance of 32 bit Floating Point
  Arithmetic in Obtaining 64 bit Accuracy (Revisiting Iterative Refinement for
  Linear Systems)}}, in {SC '06: Proceedings of the 2006 ACM/IEEE Conference on
  Supercomputing}, Nov 2006, pp.~50--50,
  \url{https://doi.org/10.1109/SC.2006.30}.

\bibitem{MandelMcCormickBank1987}
{\sc J.~Mandel, S.~McCormick, and R.~Bank}, {\em {Variational Multigrid
  Theory}}, SIAM, Philadelphia, 1987, ch.~5, pp.~131--177,
  \url{https://doi.org/10.1137/1.9781611971057.ch5}.

\bibitem{McCormick1985}
{\sc S.~F. McCormick}, {\em {Multigrid Methods for Variational Problems:
  General Theory for the V-Cycle}}, SIAM Journal on Numerical Analysis, 22
  (1985), pp.~634--643, \url{https://doi.org/10.1137/0722039}.

\bibitem{Moler1967}
{\sc C.~B. Moler}, {\em {Iterative Refinement in Floating Point}}, J. ACM, 14
  (1967), pp.~316--321, \url{https://doi.org/10.1145/321386.321394}.

\bibitem{Pedram2017}
{\sc A.~{Pedram}, S.~{Richardson}, M.~{Horowitz}, S.~{Galal}, and
  S.~{Kvatinsky}}, {\em {Dark Memory and Accelerator-Rich System Optimization
  in the Dark Silicon Era}}, IEEE Design Test, 34 (2017), pp.~39--50,
  \url{https://doi.org/10.1109/MDAT.2016.2573586}.

\bibitem{Strang2008}
{\sc G.~Strang and G.~Fix}, {\em {An Analysis of the Finite Element Method}},
  Wellesley-Cambridge Press, second~ed., 2008.

\bibitem{Strzodka2006}
{\sc R.~Strzodka and D.~G{\"o}ddeke}, {\em {Mixed Precision Methods for
  Convergent Iterative Schemes}}, in Proceedings of the 2006 Workshop on Edge
  Computing Using New Commodity Architectures, May 2006, pp.~D--59--60,
  \url{http://asc.ziti.uni-heidelberg.de/sites/default/files/research/papers/public/StGo06convIter.pdf}.

\bibitem{Benzaken2020}
{\sc R.~Tamstorf, J.~Benzaken, and S.~F. McCormick}, {\em
  {Discretization-error-accurate mixed-precision multigrid solvers}}, SIAM
  Journal on Scientific Computing,  (2020), p.~Submitted.

\bibitem{Vassilevski2008}
{\sc P.~S. Vassilevski}, {\em {Multilevel Block Factorization
  Preconditioners}}, Prentice-Hall series in automatic computation,
  Springer-Verlag, New York, 2008,
  \url{https://doi.org/10.1007/978-0-387-71564-3}.

\bibitem{Wilkinson1948}
{\sc J.~H. Wilkinson}, {\em {Progress report on the Automatic Computing
  Engine}}, Tech. Report MA/17/1024, Mathematics Division, Department of
  Scientific and Industrial Research, National Physical Laboratory, Teddington,
  UK, April 1948.

\bibitem{Wilkinson1963}
{\sc J.~H. Wilkinson}, {\em {Rounding Errors in Algebraic Processes}},
  Prentice-Hall series in automatic computation, Prentice-Hall, Englewood
  Cliffs, N. J., 1963.

\end{thebibliography}


\begin{thebibliography}{1}

\bibitem{Adams03}
{\sc M.~Adams, M.~Brezina, J.~Hu, and R.~Tuminaro}, {\em {Parallel Multigrid
  Smoothing: Polynomial Versus Gauss--Seidel}}, Journal of Computational
  Physics, 188 (2003), pp.~593--610,
  \url{https://doi.org/10.1016/S0021-9991(03)00194-3}.

\end{thebibliography}

\end{document}

% --- supplement: mpmga_supplement.tex ---

\maketitle

\section{Double relaxation sweeps}
The theory in the main paper is restricted to V$(1,0)$-cycles, meaning that each {\V } uses one {\em pre-smoothing} sweep on the way down through the coarse grids and no {\em post-smoothing} sweeps on the way back up to the finest. This specific cycle simplifies the analysis because post-smoothing sweeps substantially complicate the estimates by the accumulation of errors in the transfer of residuals to the coarse levels and, to a lesser extent, because a single pre-smoothing sweep is easier to analyze than multiple sweeps. On the other hand, while the analysis of a general V-cycle would be too complicated to present here, we can more easily analyze multiple pre-smoothing sweeps as we illustrate now for a V$(2, 0)$-cycle. %In what follows, we assume that the reader is familiar with the notation and results in \cite{McCormick2020}.

A relatively simple way to handle multiple sweeps is to combine them into one. To this end, for each $j \in \{1,2\}$, consider a monotonically energy-convergent stationary linear iteration $x \leftarrow x - \m^{(j)} (Ax - b)$, where $\m^{(j)} \in \Re^{n \times n}$, and let $\alpha_j$ be a constant such that computing $\m^{(j)} z$ for a vector $z \in \Re^n$ in $\ewed$-precision yields the result $\m^{(j)}  z + \delta, \, \|\delta\| \le \alpha_j \ewed \|z\|$. Then the key point here is that the error propagation matrix for relaxation with preconditioner $\mm$ followed by relaxation with preconditioner $\n$ can be written as $(I - \n A)(I - \mm A) = I - \Msq A$, where $\Msq = \mm + \n - \n A\mm$. We can therefore think of, and implement, two relaxation sweeps as just the {\em one} sweep $y \leftarrow y - \Msq(Ay - r)$, which means that we can analyze a V$(2, 0)$-cycle as just a V$(1,0)$-cycle with this $\Msq$.  The implication is that we just need to provide estimates for $\|\Msq\|$ and a constant $\alpha_{\Msq}$ such that computing $\Msq z$ for any vector $z$ in $\ewed$-precision yields the result $\Msq  z + \delta_{\Msq}, \, \|\delta_{\Msq}\| \le \alpha_{\Msq} \ewed \|z\|$. This is done in our next theorem.

\begin{thm}{\em Double Sweeps.}
For a double sweep in the ordering specified by $\Msq z = ((\n z) + (\mm z) - (\n (A(\mm z))))$, the constant $\alpha_{\Msq}$ can be chosen as follows:
\begin{align}
\alpha_{\Msq} &= \|\Msq\| + (1 + \ewed) [(\EN + \alpha_2 \ewed)(\|A\| \alpha_1 + \A \dot{m}_A  \alpha_1 \ewed + \A \dot{m}_A  \|\mm\|) \nonumber \\
&+ \|A\| \cdot \|\mm\| \alpha_2 + \|\mm\| + \EN + 2\alpha_1 + 2\alpha_2].
\label{msq}
\end{align}
Moreover, in general, $\|\Msq\|  \le \|\mm\| + \EN + \|A\| \cdot \|\mm\|\cdot\EN$. The special case $\mm = \n = \frac{\omega}{\|A\|} I, \, 0 < \omega < 2$ yields the sharp estimate $\|\Msq\|  \le \frac{2\omega}{\|A\|}$.
\end{thm}
\begin{proof}
The following is meant to clarify the stages for computing $\Msq z$, with the subscripted $w$'s standing for the indicated quantities computed in $\ewed$-precision:
\[
\Msq z = \underbrace{(\underbrace{(\n z)}_{w_4} + \underbrace{(\mm z)}_{w_1} - \underbrace{(\n \underbrace{(A\underbrace{(\mm z) }_{w_1})}_{w_2})}_{w_3}) .}_{w_5}
\]
Thus, by definition,
\begin{equation}
w_1 = \mm z + \delta_1, \quad \|\delta_1\| \le \alpha_1 \ewed \|z\|.
\label{Mz}
\end{equation}
We then use (\ref{fl5a}) in the main paper to obtain the slight overestimate
\[
w_2 = A\mm z + A \delta_1 + \delta_2, \quad \|\delta_2\| \le \A \dot{m}_A  \ewed \|w_1\| = \A \dot{m}_A  \ewed \|\mm z + \delta_1\|.
\]
Letting $\delta_3 = A \delta_1 + \delta_2$, where $\|\delta_3\| \le \|A\| \cdot \|\delta_1\| +\A \dot{m}_A  \ewed (\|\delta_1\| + \|\mm z\|)$, yields
\[
w_2 = A\mm z + \delta_3, \quad \|\delta_3\| \le \left((\|A\| + \A \dot{m}_A  \ewed) \alpha_1 + \A \dot{m}_A  \|\mm\|\right) \ewed \|z\|.
\]
Similarly,
\[
w_3 =  \n A\mm z + \n \delta_3 + \delta_4,  \quad \|\delta_4\| \le \alpha_2 \ewed \|w_2\| = \alpha_2 \ewed \|A\mm z + \delta_3\|.
\]
Letting $\delta_5 = \n \delta_3 + \delta_4$, where $\|\delta_5\| \le \EN \cdot \|\delta_3\| +\alpha_2 \ewed (\|A\mm z\| + \|\delta_3\|)$, yields $w_3 = \n A\mm z + \delta_5, \, \|\delta_5\| \le \Upsilon \ewed \|z\|$, where
\begin{equation}
\Upsilon  = (\EN + \alpha_2 \ewed)(\|A\| \alpha_1  + \A \dot{m}_A  \alpha_1  \ewed + \A \dot{m}_A  \|\mm\|) +  \|A\| \cdot \|\mm\| \alpha_2 \ewed \|z\|.
\label{MAMb}
\end{equation}
We can now use the estimate $w_4 = \n z + \delta_6, \, \|\delta_6\| \le \alpha_2 \ewed \|z\|$ together with (\ref{Mz}) and (\ref{MAMb}) to obtain
\begin{align*}
w_5 &= (w_1 + w_4 - w_3) + \delta_7 + \delta_8 = \Msq z + \delta_1 - \delta_5 + \delta_6 + \delta_7 + \delta_8, \\
& \|\delta_7\| \le \ewed \|w_1 + w_4\| \le (\|\mm\| + \EN + \alpha_1  + \alpha_2) \ewed\|z\|, \\
& \|\delta_8\| \le \ewed \|w_1 + w_4 - w_3 + \delta_7\| \le  \ewed (\|\Msq\| \|z\|+ \|\delta_1\| + \|\delta_5\| + \|\delta_6\| + \|\delta_7\|).
\end{align*}
Letting $\delta = \delta_1 - \delta_5 + \delta_6 + \delta_7 + \delta_8$, then $w_5 = \Msq z + \delta,$ where
\begin{align*}
\| \delta \| & \le  \|\delta_1\| + \|\delta_5\| + \|\delta_6\| + \|\delta_7\| + \|\delta_8\|\\
& \le \left(\ewed \|\Msq\| + (1 + \ewed) ( \|\delta_1\| + \|\delta_5\| + \|\delta_6\| + \|\delta_7\|\right) \|z\|\\
& \le \left(\|\Msq\|  + (1 + \ewed) \left[ \alpha_1  + \alpha_2 + \Upsilon  + \alpha_1  + \alpha_2\right] \right)\ewed \|z\| ,
\end{align*}
thus establishing (\ref{msq}). The estimates for $\|\Msq\|$ are straightforward.
\end{proof}

\section{Second-order Chebyshev iteration}
The equation in (\ref{msq}) can be used in a recursive way to analyze any Krylov method, where the error propagation matrix is a polynomial in $A$. For example, it is fairly straightforward to show that $\alpha_{\Msq}  = \bigO(\frac{\dot{m}_A}{\|A\|})$ for the $\mathcal{K}^\textrm{th}$-order Chebyshev relaxation (cf., \cite{Adams03}), although the constant in this order bound depends exponentially on $\mathcal{K}$. On the other hand, a more direct approach can achieve a somewhat tighter bound, as illustrated for the case $\mathcal{K} = 2$ in our next theorem.

Second-order Chebyshev relaxation can be formed from two sweeps of Richardson iteration with error propagation factors of the form $I - s_j A, \, j=1,2.$ Assume that the coefficients in the Chebyshev factors are chosen to so that $0 < s_j = \bigO(\frac{1}{\|A\|}), \, j=1, 2$. This assumption would generally hold in the multigrid context when the smoothing interval is chosen to be a fixed percentage of the upper spectrum of $A$. We can thus write Chebyshev iteration in the form $y \leftarrow y - \m_C(Ay - r)$, where $\m_C = \omega_1I - \omega_2A$, $\omega_1 = \bigO(\frac{1}{\|A\|}) > 0$, and $\omega_2 = \bigO(\frac{1}{\|A\|^2}) > 0$.  Define $\dot{m}_A  = \frac{m_A}{1 - m_A \ewed}$, where $m_A$ is the maximum number of nonzeros in the rows of $A$. Finally, suppose that computing $\m_C z$ in $\ewed$-precision for any vector $z$ yields $\m_C  z + \delta_{\m_C}, \, \|\delta_{\m_C}\| \le \alpha_{\m_C} \ewed \|z\|$, for some constant $\alpha_{\m_C}$.

\begin{thm}{\em Chebyshev.}
For one second-order Chebyshev iteration in the ordering specified by $\m_C z = (\omega_1 z) - (\omega_2 (A z))$, we can choose
\[
\alpha_{\m_C} = \|\m_C\| + (\omega_1 + (1 + \ewed)\omega_2 \A \dot{m}_A  + \omega_2\|A\|)(1 + \ewed) .
\]
Note that $\|\m_C\|  = \bigO(\frac{1}{\|A\|})$ and, if $\A \approx \bigO(\|A\|)$, then $\alpha_{\m_C} = \bigO\left(\frac{\dot{m}_A}{\|A\|}\right)$.
\label{thm:Chebyshev}
\end{thm}

\begin{proof}
Computing $\m_C z$ according to $\m_C z = \underbrace{(\underbrace{(\omega_1 z)}_{w_1} - \underbrace{(\omega_2 \underbrace{(A z)}_{w_2})}_{w_3})}_{w_4}$, we have
\begin{align*}
w_1 &= \omega_1 z + \delta_1, \quad \|\delta_1\| \le \omega_1 \ewed \|z\|, \\
w_2 &= A z + \delta_2, \quad \|\delta_2\| \le \A \dot{m}_A  \ewed \|z\|, \\
w_3 &= -\omega_2 w_2 + \delta_3, \quad \|\delta_3\| \le \omega_2 \ewed \|A z + \delta_2\| \le  \omega_2 \ewed (\|A\| + \A \dot{m}_A  \ewed)\|z\|, \\
w_4 &= w_1 + w_3 + \delta_4, \quad \|\delta_4\| \le \ewed \|w_1 + w_3\| 
=  \ewed \|\m_C z + \delta_1 + \omega_2\delta_2 + \delta_3\|.
\end{align*}
This implies that $w_4 = \m_Cz + \delta_C, \, \delta_C = \delta_1 - \omega_2 \delta_2 + \delta_3 + \delta_4$. The theorem now follows from noting that
\begin{align*}
 \|\delta_C\| &\le \|\delta_1 - \omega_2 \delta_2 + \delta_3\| + \ewed \|\m_C z + \delta_1 + \omega_2\delta_2 + \delta_3\|\\
&\le \|\m_C\| \|z\| \ewed + ( \|\delta_1\| + \|\omega_2\delta_2\| + \|\delta_3\|)(1 + \ewed).
\end{align*}

\begin{rem}
Chebyshev iteration based on $A$ preconditioned by its diagonal $D$ can be formed from two sweeps of damped Jacobi with error propagation operators $I - s_j D^{-1}A = I - M^{(j)}A,$ where $M^{(j)} = M_{ii}^{(j)} = \bigO\left(\frac{\kappa(D)}{\|A\|}\right), \, j=1, 2.$ To estimate $\alpha_{\m_C}$ for this case, we can mimic the proof of Theorem~\ref{thm:Chebyshev}, but with the understanding now that the $\omega_j$ are matrices: $\omega_1 = M^{(2)} + M^{(1)}$ and $\omega_2 = M^{(2)} AM^{(1)}$. If the diagonal matrices $M^{(j)}$ have been formed accurately beforehand, perhaps in the setup phase at higher precision, then the line of reasoning is much the same as the above proof with two extra steps to account for the increased complexity of $\m_C$. The resulting estimate is of the same order as that in Theorem~\ref{thm:Chebyshev} with the exception that a power of $\kappa(D)$ appears in the implied constant. When the diagonal entries of $A$ are widely varying, it may be more effective to construct $D^{-1}A$ in higher precision before it is used in V-cycles in order to avoid an explicit dependence on the condition number of $D$ appearing in the estimate for $\alpha_{\m_C}$.
\end{rem}

\end{proof}

\bibliographystyle{siamplain}
\bibliography{mpmg}